\newcommand{\mk}{\mathcal{K}}
\newtheorem{thm}{Theorem}[section]
\newtheorem{lmm}[thm]{Lemma}
\newtheorem{cor}[thm]{Corollary}
\newcommand{\ee}{\mathbb{E}}
\newcommand{\pp}{\mathbb{P}}
\newcommand{\ra}{\rightarrow}
\newcommand{\rr}{\mathbb{R}}
\newcommand{\var}{\mathrm{Var}}
\newcommand{\ve}{\varepsilon}
\newcommand{\bh}{\hat{\beta}}
\newcommand{\hk}{\hat{K}}
\newcommand{\hm}{\hat{\mu}'}
\newcommand{\hmu}{\hat{\mu}}  
\newcommand{\ck}{\mathcal{K}}
\numberwithin{equation}{section}
\begin{document}
\title{Prediction error of cross-validated Lasso}
\author{Sourav Chatterjee}
\address{\newline Department of Statistics \newline Stanford University\newline Sequoia Hall, 390 Serra Mall \newline Stanford, CA 94305\newline \newline \textup{\tt souravc@stanford.edu}}
\author{Jafar Jafarov}
\address{\newline Department of Mathematics \newline Stanford University\newline 450 Serra Mall, Building 380 \newline Stanford, CA 94305\newline \newline \textup{\tt jafarov@stanford.edu}}
\thanks{Research partially supported by NSF grant DMS-1441513}
\keywords{Lasso, cross-validation, prediction error, error variance}
\subjclass[2010]{62F10, 62F12, 62F30, 62J05}

\begin{abstract}
In spite of the wealth of literature on the theoretical properties of the Lasso, there is very little known when the value of the tuning parameter is chosen using the data, even though this is what actually happens in practice. We give a general upper bound on the prediction error of Lasso when the tuning parameter is chosen using a variant of 2-fold cross-validation. No special assumption is made about the structure of the design matrix, and the tuning parameter is allowed to be optimized over an arbitrary data-dependent set of values. The proof is based on  a general principle that may extend to other kinds of cross-validation as well as to other penalized regression methods. Based on this result, we propose a new estimate for error variance  in high dimensional regression and prove that it has good properties under minimal assumptions. 
\end{abstract}

\maketitle

\section{Introduction}
Since its introduction by Tibshirani \cite{tibs96}, the Lasso has become one of the most popular tools for high dimensional regression. Although most readers of this article will undoubtedly be familiar with the Lasso, let us still describe the setup for the sake of fixing notation. Consider the linear regression model 
\begin{equation}\label{lin1}
Y = X\beta + \ve\, ,
\end{equation}
where $X$ is an $n\times p$ design matrix, $\beta$ is a $p\times 1$ vector of unknown parameters, $\ve$ is a $n\times 1$ vector of i.i.d.~$N(0,\sigma^2)$ random variables (where $\sigma^2$ is an unknown parameter called the `error variance'), and $Y$ is the $n\times 1$ response vector. When $p$ is small and $n$ is large, ordinary least squares is an effective tool for estimating the parameter vector $\beta$. However, many modern applications have the characteristic that both $n$ and $p$ are large, and sometimes $p$ is much larger than $n$. The Lasso prescribes a way of estimating $\beta$ in this scenario. There are two equivalent versions of the Lasso, namely, the primal version and the dual version. In the primal version, the statistician chooses a tuning parameter $K$, and produces an estimate $\bh$ by minimizing $\|Y-X\beta\|$ subject to $|\beta|_1\le K$, where $\|\cdot\|$ is the Euclidean norm in $\rr^n$ and $|\cdot|_1$ is the $\ell^1$ norm in $\rr^p$.  In the dual version, the statistician chooses a tuning parameter $\lambda$, and produces an estimate $\bh$ by minimizing $\|Y-X\beta\|^2 + \lambda |\beta|_1$. Note that the optimal $\bh$ may not be unique in either version. Although the Lasso was introduced in its primal form in Tibshirani's paper~\cite{tibs96}, the dual form has become more popular due to algorithmic efficiency. There is no universally accepted prescription for choosing the tuning parameter in either of the two versions. In practice, the tuning parameter is almost always chosen in a data-dependent manner, often using cross-validation.

There is a large body of literature on the theoretical properties of the Lasso. Instead of trying to give a comprehensive overview, we will just  highlight some essential references from this literature. The analysis of basis pursuit by Chen, Donoho and Saunders~\cite{chenetal98} and the papers of Donoho and Stark \cite{donohostark} and Donoho and Huo \cite{donohohuo01} provided some key ideas for subsequent authors. Knight and Fu \cite{knightfu00} proved consistency of $\bh$ under the assumption that $p$ remains fixed and $n\ra\infty$. When both $p$ and $n$ tend to infinity, the consistency of $\bh$ has no traditional definition. Greenshtein and Ritov~\cite{gr} defined a notion of consistency in this setting that they called `persistence', and proved that under a set of assumptions, the Lasso estimator is persistent. Persistence of a sequence of estimators is defined as follows. Rewrite the linear regression model \eqref{lin1} as
\begin{equation}\label{lin2}
y_i = x_i \beta + \ve_i\, , \ \ i=1,\ldots, n\, ,
\end{equation}
where $x_1,\ldots, x_n$ are $n$ rows of the design matrix $X$, $\ve_1\ldots, \ve_n$ are the components of the error vector $\ve$, and $y_1,\ldots,y_n$ are the components of the response vector $Y$. Suppose  that the pairs $(y_i,x_i)$ are i.i.d.~draws from some probability distribution $F_n$ on $\rr\times \rr^{p}$. For any estimator $\bh$, define
\begin{equation}\label{lndef}
L_n(\bh) := \ee(y-x\bh)^2\, ,
\end{equation}
where $(y,x)$ is a pair drawn from $F_n$ that is independent of $(y_1,x_1),\ldots, (y_n,x_n)$. Similarly for any $\beta\in \rr^p$ define $L_n(\beta) := \ee(y-x\beta)^2$. If we have a sequence of regression problems as above and estimators $\bh_n$, Greenshtein and Ritov \cite{gr} called the sequence $\bh_n$ `persistent' if 
\[
\lim_{n\ra\infty}(L_n(\bh_n) - \inf_\beta L_n(\beta) )=0\, .
\]
Persistence has become a popular notion of consistency in high dimensional problems. It is sometimes called `risk consistency'. It has been  the topic of investigation in several subsequent papers on the Lasso, such as Bunea et al.~\cite{btw06, btw07} and van de Geer \cite{vdg08}. Quantitative bounds were given in Zhang~\cite{zhang09}, Rigollet and Tsybakov \cite{rigollet}, B\"uhlmann and van de Geer \cite{bg11}, Bartlett et al.~\cite{bartlettetal12} and Chatterjee \cite{chatterjee14, chatterjee14b}. 

Another kind of consistency that has been investigated in the context of Lasso is model selection consistency. The Lasso estimator has the property that often, most of the coordinates of $\bh$ turn out to be equal to zero. The nonzero coordinates therefore do an automatic `model selection'. Consistency of model selection by Lasso under a variety of assumptions on the design matrix and the sparsity of $\beta$ was investigated by Zou~\cite{zou06}, Donoho et al.~\cite{donohoetal06},  Wainwright~\cite{wainwright09},  Meinshausen and B\"uhlmann \cite{meinshausenbuhlmann06}, Meinshausen and Yu \cite{meinshausenyu09}, Bickel et al.~\cite{bickeletal09}, Massart and Meynet \cite{massartmeynet11} and Zhao and Yu \cite{zhaoyu07}. 

In all of the above papers, the tuning parameter is considered to be deterministically chosen. For example, Greenshtein and Ritov \cite{gr} showed that if the tuning parameter $K$ in the primal form of the Lasso grows like $o((n/\log n)^{1/4})$, then the Lasso estimator is persistent. This is the general flavor of subsequent results, such as those in \cite{bg11, btw07, donohoetal06, meinshausenbuhlmann06, meinshausenyu09, vdg08, wainwright09, zhaoyu07, zou06}. 

However, as mentioned before, this is usually not how the tuning parameter is chosen in practice. Systematic ways of deterministically choosing the tuning parameter have been proposed by Wang and Leng \cite{wangleng07}, Zou et al.~\cite{zouetal07} and Tibshirani and Taylor \cite{tt}. Other authors, such as Tibshirani~\cite{tibs96, tibs11}, Greenshtein and Ritov~\cite{gr}, Hastie et al.~\cite{hastieetal09}, Efron et al.~\cite{efronetal04}, Zou et al.~\cite{zouetal07}, van de Geer and Lederer \cite{vdgl13}, Fan et al.~\cite{fanetal12} and Friedman \cite{friedmanetal10} recommend using cross-validation to select the value of the tuning parameter. In practice, the tuning parameter is almost always chosen using some data-dependent method, often cross-validation.

In view of the above, it is surprising that there are very few rigorous results about the Lasso when the tuning parameter is chosen through cross-validation. In fact, the only results we are aware of are from some recent papers of Lecu\'e and Mitchell \cite{lecuemitchell12} and Homrighausen and McDonald~\cite{hm13a, hm13, hm14}. The paper \cite{lecuemitchell12} aims to build a general theory of cross-validation in a variety of problems; unfortunately, it seems that for cross-validation in Lasso, it requires that the vector $x$ of explanatory variables is a random vector with a log-concave distribution. This is possibly too strong an assumption to be practically useful. The papers \cite{hm13a, hm13, hm14} have more relaxed assumptions. Roughly speaking, the main result of \cite{hm13} goes as follows. Consider the primal form of Lasso, and suppose that the tuning parameter $K$ is chosen from an interval $[0,K_n]$ using $r$-fold cross-validation, where $r$ is a fixed number and $K_n$ is determined according to a formula given in \cite{hm13}. Let $\hk$ be the optimized value of $K$, and let $\bh_{\hk}$ be the Lasso estimate of $\beta$ for this value of the tuning parameter. Suppose that $(y_i,x_i)$ are i.i.d.~from some distribution $F_n$, and let $L_n(\bh_{\hk})$ be defined as in \eqref{lndef}. Additionally suppose that $p$ is growing like $n^\alpha$ for some positive $\alpha$, and that the minimum nonzero singular value of the design matrix $X$ is, with high probability, of order $n^{\alpha/2}$ or bigger. Under these conditions and a few other assumptions, the main result of \cite{hm13} states that, as $n\ra\infty$, 
\[
L_n(\bh_{\hk}) - \inf_{\beta\, :\, |\beta|_1\le K_n} L_n(\beta) = o\biggl(K_n^2\sqrt{\frac{\log n}{n}}\biggr)\, .
\]
Although this is laudable as the first mathematical result about any kind of  consistency of cross-validated lasso, there are a number of unsatisfactory aspects of this result. For the theorem to be effective, we need 
\begin{equation}\label{kncond}
K_n = o(n^{1/4}(\log n)^{-1/4})\, ,
\end{equation}
which is too slow for all practical purposes. Usually, the tuning parameter is optimized over a fairly large range, determined by the statistician using some ad hoc data-dependent rule. The second issue is that the value of $K_n$ is prescribed by a formula given by the authors, which may not end up satisfying \eqref{kncond}. Lastly, the condition on the smallest nonzero singular value of the design matrix looks a bit restrictive, especially since it has been observed in several recent papers that risk consistency in Lasso (with deterministic value of the tuning parameter) holds without any conditions on the design matrix \cite{ bartlettetal12, bg11, chatterjee14, rigollet}. 

The goal of this paper is to address these issues and prove a new and better upper bound on the prediction error of cross-validated Lasso under fewer assumptions. The main result is presented in the next section. An application to error variance estimation is worked out in Section \ref{evsec}.

Incidentally, there is a substantial body of literature on cross-validation for ridge regression, the classical cousin of Lasso. A representative paper from this literature, for example, is the highly cited article of Golub, Heath and Wahba~\cite{golubheathwahba79}. The techniques and results of these papers depend heavily on the friendly mathematical structure of ridge regression. They do not seem to generalize to other settings in any obvious way. 

Similarly, classical results on cross-validation such as those of Stone~\cite{stone74, stone77}, apply only to problems where $p$ is fixed, and are therefore not relevant in our setting. 

It is important to point out that the Lasso is not the only technique for high dimensional regression under sparsity assumptions. Numerous methods have been proposed in the last twenty years. Many of them, like the Lasso, are based on the idea of performing regression with a penalty term. These include basis pursuit~\cite{chenetal98}, SCAD~\cite{fanli01}, LARS~\cite{efronetal04}, elastic net~\cite{zh05} and the Dantzig selector~\cite{candestao07}. The Lasso itself has been sophisticated over the years, yielding variants such the group Lasso~\cite{yuanlin06}, the adaptive Lasso~\cite{zou06} and the square-root Lasso~\cite{bcw11}. Penalized regression is not the only approach; for example, methods of model selection by testing hypotheses have been proposed in \cite{abramovichetal06,barbercandes15, birgemassart01,birge06}. Most of these methods involve some sort of a tuning parameter, which is often chosen using cross-validation. The techniques of this paper may be helpful in analyzing cross-validation in a variety of such instances. Our reason for focusing on the Lasso is simply to choose one test case where the proof technique may be implemented, and the Lasso seemed like a natural choice because of its popularity among practitioners.

\section{Main result}\label{mainsec}
Consider the linear regression model \eqref{lin1}. Let $x_i$ be the $i$th row of $X$, $y_i$ be the $i$th component of $Y$ and $\ve_i$ be the $i$th component of $\ve$, so that \eqref{lin2} holds. In our setup (unlike \cite{gr}), the row vectors $x_1,\ldots, x_n$ are non-random elements of $\rr^p$. 

The mean squared prediction error of an estimator $\bh$ is defined as
\begin{equation*}\label{mspe1}
\textup{MSPE}(\bh) := \ee\biggl(\frac{\|X\beta^*-X\bh\|^2}{n}\biggr)\, ,
\end{equation*}
where $\beta^*$ is the true value of $\beta$. 

Our goal is to produce an estimate $\bh^{\textup{CV}}$ of $\beta$  using the Lasso procedure and optimizing the tuning parameter by a certain variant of $2$-fold cross-validation, and then give an upper bound for its mean squared prediction error. The specific algorithm that we are proposing is the following.
\begin{enumerate}[1.]
\item Divide the set $\{1,\ldots, n\}$ randomly into two parts $I$ and $I^c$, by independently putting each element into either $I$ or $I^c$ with equal probability. 
\item For each $K\ge 0$, let $\bh^{(K,1)}$ be a minimizer of 
\[
\sum_{i\in I} (y_i-x_i\beta)^2
\]
subject to $|\beta|_1\le K$ and let $\bh^{(K,2)}$ be a minimizer of 
\[
\sum_{i\in I^c} (y_i-x_i\beta)^2
\]
subject to $|\beta|_1\le K$. If there are multiple minima, choose one according to some deterministic rule. In the rare event that $I$ or $I^c$ is empty, define the corresponding $\bh$'s to be zero. 
\item Let $N_1$ and $N_2$ be two nonnegative integer-valued random variables, where $N_1$ is a function of $(y_i,x_i)_{i\in I^c}$ and $N_2$ is a function of $(y_i,x_i)_{i\in I}$. These numbers will determine the range over which the tuning parameter is optimized in the next step. The choice of $N_1$ and $N_2$ is left to the user. 
\item Let $\delta$ be a positive real number, to be chosen by the user. Let $\hk_1$ be a minimizer of 
\[
\sum_{i\in I} (y_i - x_i \bh^{(K,2)})^2
\]
as $K$ ranges over the set $\{0,\delta, 2\delta,\ldots,N_1\delta\}$. Let $\hk_2$ be a minimizer of 
\[
\sum_{i\in I^c} (y_i - x_i \bh^{(K,1)})^2
\]
as $K$ ranges over the set $\{0,\delta,2\delta,\ldots,N_2\delta\}$. 
\item Traditional cross-validation produces a single optimized $\hk$ instead of $\hk_1$ and $\hk_2$ as we did above. In this step, we will combine $\hk_1$ and $\hk_2$ to produce a single $\hk$, as follows. Define a vector $\hm\in \rr^n$ as 
\[
\hm_i :=
\begin{cases}
x_i \bh^{(\hk_1,2)} &\text{ if } i\in I\, ,\\
x_i \bh^{(\hk_2, 1)} &\text{ if } i\in I^c\, .
\end{cases}
\]
For each $K$, let $\bh^{(K)}$ be a minimizer of $\|Y-X\beta\|$ subject to $|\beta|_1\le K$. Let $\hk$ be a minimizer of $\|\hm - X\bh^{(K)}\|$ over $K\ge 0$.
\item Finally, define the cross-validated estimate $\bh^{\textup{CV}} := \bh^{(\hk)}$. 
\end{enumerate}
The following theorem gives an upper bound on the mean squared prediction error of $\bh^{\textup{CV}}$ under the condition that $N_1\delta$ and $N_2\delta$ both exceed $|\beta^*|_1$. This is the main result of this paper. In the statement of the theorem, we use the standard convention that for a random variable $X$ and an event~$A$, $\ee(X;A)$ denotes the expectation of the random variable $X1_A$, where~$1_A=1$ if $A$ happens and $0$ otherwise. 
\begin{thm}\label{mainthm}
Consider the linear regression model \eqref{lin1}. Let $\bh^{\textup{CV}}$ be defined as above and $\beta^*$ be the true value of $\beta$. Let $L := |\beta^*|_1+\delta$, and let
\begin{align*}
&M := \max_{1\le j\le p}\frac{1}{n}\sum_{i=1}^n x_{ij}^4\,, \ \ l_1 := \ee\log (N_1+1)\,,\ \ l_2 := \ee\log (N_2+1)\, .
\end{align*}
Then
\begin{align*}
\ee\biggl(\frac{\|X\beta^*-X\bh^{\textup{CV}}\|^2}{n};\, N_1\delta \ge |\beta^*|_1,\, N_2\delta\ge |\beta^*|_1\biggr) &\leq  C_1 \frac{\sqrt{l_1}+\sqrt{l_2}}{\sqrt{n}}+ C_2 \sqrt{\frac{\log (2p)}{n}} + E_n\, ,
\end{align*}
where
\begin{align*}
&C_1 = 16(4\sigma^4 +2L^2M^{1/2}\sigma^2)^{1/2}\,, \\
& C_2 = 96L^2M^{1/2} + 57LM^{1/4}\sigma \, ,
\end{align*}
and $E_n$ is the exponentially small term
\[
16\biggl(\frac{(n+5)\sigma^4}{n} + \frac{(n+1)\sigma^2}{n}L^2M^{1/2}\biggr)^{1/2}\biggl(\frac{1+2^{-1/2}}{2}\biggr)^{n/2}\,.
\]
\end{thm}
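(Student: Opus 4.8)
Write $\mu^{*}:=X\beta^{*}$ and recall the combined out-of-fold vector $\hm$ produced in step 5. The plan is to first collapse the whole problem into two prediction errors by exploiting the minimizing property of $\hk$. Since $|\beta^{*}|_{1}\le L$, the point $\mu^{*}$ lies in the convex set $\{X\beta:|\beta|_{1}\le L\}$, and because $\hk$ minimizes $\|\hm-X\bh^{(K)}\|$ we may compare against $K=L$ to obtain $\|\hm-X\bh^{(\hk)}\|\le\|\hm-X\bh^{(L)}\|\le\|\hm-\mu^{*}\|+\|\mu^{*}-X\bh^{(L)}\|$. Two applications of the triangle inequality then give
\[
\|\mu^{*}-X\bh^{\textup{CV}}\|\le 2\|\mu^{*}-\hm\|+\|\mu^{*}-X\bh^{(L)}\|,
\]
so after squaring (using $(a+b)^{2}\le 2a^{2}+2b^{2}$) it suffices, on the event $\{N_{1}\delta\ge|\beta^{*}|_{1},\,N_{2}\delta\ge|\beta^{*}|_{1}\}$, to bound $\ee\|\mu^{*}-\hm\|^{2}/n$ and $\ee\|\mu^{*}-X\bh^{(L)}\|^{2}/n$ separately.

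The full-data term is the routine one. Here $X\bh^{(L)}$ is the Euclidean projection of $Y=\mu^{*}+\ve$ onto $\{X\beta:|\beta|_{1}\le L\}$, a convex set containing $\mu^{*}$, so the obtuse-angle property yields $\|\mu^{*}-X\bh^{(L)}\|^{2}\le\langle\ve,X(\bh^{(L)}-\beta^{*})\rangle\le 2L\,\|X^{T}\ve\|_{\infty}$. A Gaussian maximal inequality over the $2p$ signed columns, together with $\sum_{i}x_{ij}^{2}\le nM^{1/2}$ (hence $\|X_{j}\|\le\sqrt{n}\,M^{1/4}$), controls $\ee\|X^{T}\ve\|_{\infty}$ and produces the $LM^{1/4}\sigma\sqrt{\log(2p)/n}$ part of the bound. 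The same projection argument, applied on a single fold at the near-oracle grid point $K^{*}\in[|\beta^{*}|_{1},L]$ that exists on the event above, bounds the \emph{in-sample} out-of-fold error $\sum_{i\in I^{c}}(x_{i}(\beta^{*}-\bh^{(K^{*},2)}))^{2}$. Converting this to the \emph{out-of-sample} error $\sum_{i\in I}(x_{i}(\beta^{*}-\bh^{(K^{*},2)}))^{2}$ costs a fold-imbalance term $\sup_{|\gamma|_{1}\le 2L}|\gamma^{T}(\sum_{i}\eta_{i}x_{i}^{T}x_{i})\gamma|$, where $\eta_{i}=\pm1$ encode the random split; bounding this by $4L^{2}\max_{j,k}|\sum_{i}\eta_{i}x_{ij}x_{ik}|$ and applying a Rademacher maximal inequality over the $p^{2}$ Gram entries (using $\sum_{i}x_{ij}^{2}x_{ik}^{2}\le nM$) yields the $L^{2}M^{1/2}\sqrt{\log(2p)/n}$ part. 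Together these give the $C_{2}\sqrt{\log(2p)/n}$ contribution.

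The cross-validation selection is the crux. I would condition on the complementary fold: for the $I$-part, conditioning on $(y_{i},x_{i})_{i\in I^{c}}$ freezes $N_{1}$ and every candidate $\bh^{(K,2)}$, hence every discrepancy $a^{(K)}_{i}:=x_{i}(\beta^{*}-\bh^{(K,2)})$, while leaving $(\ve_{i})_{i\in I}$ an independent centered Gaussian vector. Writing $R_{K}:=\sum_{i\in I}a^{(K)}_{i}\ve_{i}$ and $\|a^{(K)}\|_{I}^{2}:=\sum_{i\in I}(a^{(K)}_{i})^{2}$, the fact that $\hk_{1}$ minimizes the CV objective gives the basic inequality $\|a^{(\hk_{1})}\|_{I}^{2}+2R_{\hk_{1}}\le\|a^{(K^{*})}\|_{I}^{2}+2R_{K^{*}}$. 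Crucially $K^{*}$ is deterministic, so $\ee[R_{K^{*}}\mid(y_{i},x_{i})_{i\in I^{c}}]=0$ and the selection cost reduces in expectation to $-2\,\ee R_{\hk_{1}}$. This is delicate because a naive $\sum_{m}|R_{K_{m}}|$ diverges: at large tuning values the candidates overfit and $\|a^{(K)}\|_{I}$ can be huge. The remedy is to bound $|R_{\hk_{1}}|\le\sigma\|a^{(\hk_{1})}\|_{I}\max_{0\le m\le N_{1}}|Z_{m}|$ with $Z_{m}$ standard Gaussians, and to use the minimizing property a second time in a self-bounding way, tethering $\|a^{(\hk_{1})}\|_{I}$ to $\|a^{(K^{*})}\|_{I}+\sigma\max_{m}|Z_{m}|$; since $K^{*}\le L$ one has the a priori bound $\|a^{(K^{*})}\|_{I}\le 2L\sqrt{n}\,M^{1/4}$, so the overfitting regime never enters. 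Controlling $\ee\max_{0\le m\le N_{1}}|Z_{m}|\le\sqrt{2\log(N_{1}+1)}$ and then averaging over the fold with Jensen's inequality ($\ee\sqrt{\log(N_{1}+1)}\le\sqrt{l_{1}}$) yields a selection contribution of order $\sqrt{l_{1}/n}$, the constant $(4\sigma^{4}+2L^{2}M^{1/2}\sigma^{2})^{1/2}$ being precisely the conditional standard-deviation scale of the objective increments $2R_{K}+\sum_{i\in I}(\ve_{i}^{2}-\sigma^{2})$. The symmetric argument on $I^{c}$ contributes the $\sqrt{l_{2}}$ term.

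All of the above is run on typical splits; atypically unbalanced splits — in particular the rare near-empty folds, where the corresponding $\bh$'s degenerate to $0$ — are absorbed into the exponentially small $E_{n}$ by Cauchy–Schwarz, the contribution being at most a crude second moment of the squared error times the square root of the split-imbalance probability, the latter producing $((1+2^{-1/2})/2)^{n/2}$ since $\ee\,2^{-|I|/2}=((1+2^{-1/2})/2)^{n}$. Assembling the three pieces and tracking constants gives the stated inequality. I expect the genuine difficulty to lie entirely in the selection step: reconciling the data-dependent grid length $N_{i}$ (random, yet measurable with respect to the complementary fold) with a clean maximal inequality, and deploying the minimizing property in a self-bounding manner so that the large-$K$ overfitting regime never contaminates the bound.
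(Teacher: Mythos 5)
Your overall architecture coincides with the paper's: the reduction of $\|\mu^*-X\bh^{\textup{CV}}\|$ to $2\|\mu^*-\hm\|+\|\mu^*-X\bh^{(L)}\|$ via the minimizing property of $\hk$, the treatment of the full-data term by the basic/projection inequality plus a Gaussian maximal inequality over the $2p$ signed columns, and the conversion of in-fold to out-of-fold oracle error by symmetrizing over the random split (your quadratic-form bound $4L^2\max_{j,k}|\sum_i\eta_ix_{ij}x_{ik}|$ is the paper's Lemma \ref{l1} up to the constant $4$ versus $3$) are exactly the paper's steps, carried out correctly.

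The genuine gap is in the selection step, which you correctly identify as the crux. Your tethering inequality $\|a^{(\hk_1)}\|_I\le\|a^{(K^*)}\|_I+2\sigma\max_{0\le m\le N_1}|Z_m|$ is valid (it follows from the basic inequality by completing the square), but when you feed it back into $|R_{\hk_1}|\le\sigma\|a^{(\hk_1)}\|_I\max_m|Z_m|$ you inevitably produce the quadratic term $4\sigma^2\bigl(\max_{0\le m\le N_1}|Z_m|\bigr)^2$, whose conditional expectation is of order $\sigma^2\log(N_1+1)$ and cannot be improved in general (for independent $Z_m$'s the squared maximum concentrates at $2\log N_1$). After normalizing by $n$, your final bound therefore carries an additive term of order $\sigma^2 l_1/n$. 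This is \emph{not} of the form $C_1\sqrt{l_1/n}+C_2\sqrt{\log(2p)/n}+E_n$: since $N_1$ is an arbitrary nonnegative-integer-valued function of the other fold, $l_1$ may exceed $n$ (the theorem's advertised generality is precisely that the grid may grow exponentially in $n$ or worse), and then $\sigma^2l_1/n$ dominates $C_1\sqrt{l_1/n}$ for every constant $C_1$. So your argument proves a strictly weaker statement; it recovers the theorem's rate only in the regime $l_1=O(n)$.

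The device you are missing is the paper's truncation of the grid at a conditionally deterministic, $O(n)$ energy level (Lemma \ref{mainlmm2}). With $W(K)=\sum_{i\in I}\ve_ix_i\bh^{(K,2)}$, set $m:=16|I|\sigma^2+8\sum_{i\in I}(x_i\beta^*)^2$ and $\mk':=\{K\in\mk:\sum_{i\in I}(x_i\bh^{(K,2)})^2\le m\}$. On $\mk'$ the conditional variances of the $W(K)$ are uniformly at most $m\sigma^2$, so $\ee''\bigl(\max_{K\in\mk'}W(K)\bigr)\le\sigma\sqrt{2m\log(N_1+1)}$, and Cauchy--Schwarz in the outer expectation yields the $\sqrt{n\,l_1}$ term with no quadratic correction, no matter how large $N_1$ is. If instead $\hk_1\notin\mk'$, the comparison of the CV criterion with $K=0$ (namely $\sum_{i\in I}(y_i-x_i\bh^{(\hk_1,2)})^2\le\sum_{i\in I}y_i^2$, hence $\sum_{i\in I}(x_i\bh^{(\hk_1,2)})^2\le4\sum_{i\in I}y_i^2$) forces the large-deviation event $\{4\sum_{i\in I}y_i^2>m\}$, whose probability is at most $\ee(2^{-|I|/2})=((1+2^{-1/2})/2)^n$. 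This, and not fold imbalance, is the source of $E_n$: near-empty folds are handled trivially (the basic inequality then holds with equality), whereas $E_n$ pays for discarding the overfitting portion of the grid. Your misattribution of $E_n$ to unbalanced splits is a symptom of this missing step. The remaining discrepancies (your $4L^2$ versus the paper's $3L^2$ in the symmetrization, the two-sided $\log(2(N_1+1))$ versus one-sided $\log(N_1+1)$) only inflate constants and are minor by comparison.
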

\noindent{\it \underline{Remarks}}
\begin{enumerate}[1.]
\item To understand what the theorem is saying, think of $L$, $M$ and $\sigma$ as fixed numbers that are not growing with $n$. Also, think of $\delta$ as tending to zero (or at least remaining bounded), so that $L$ is basically the same as $|\beta^*|_1$. Then the theorem says that as long as $N_1\delta$, $N_2\delta$ and $p$ tend to infinity slower than exponentially with $n$, the mean squared prediction error tends to zero as $n\ra\infty$. In fact, the prediction error goes to zero even if $|\beta^*|_1$ is allowed to grow, as long as it grows slower than $N_1\delta$, $N_2\delta$, and $n^{1/4}(\log p)^{-1/4}$. The last criterion is a familiar occurrence in papers on the consistency of Lasso, as mentioned before.
\item There are five significant advances that Theorem \ref{mainthm} makes over existing results on cross-validated Lasso~\cite{hm13, hm14}:
\begin{enumerate}[(i)]
\item The range of values over which the tuning parameter is optimized is allowed to grow exponentially in $n$.
\item The range of optimization is allowed to be arbitrarily data-dependent. 
\item The error bound depends on the $\ell^1$ norm of the true $\beta$,  and not on the range of values over which the tuning parameter is optimized (except through a logarithmic factor).
\item The theorem imposes essentially no condition on the design matrix.
\item The theorem gives a concrete error bound on the prediction error instead of an asymptotic persistence proof.
\end{enumerate}
\item Theorem \ref{mainthm} is silent on how to choose $N_1$, $N_2$ and $\delta$; the choice is left to the practitioner. Clearly, for the upper bound to be meaningful, it is necessary that with high probability  both $N_1\delta$ and $N_2\delta$ exceed $|\beta^*|_1$. This is not surprising, because if the range of values over which the tuning parameter is optimized does not contain the $\ell^1$ norm of the true $\beta$, then primal Lasso is unlikely to perform well. It is not clear whether one can produce a choice of the range that has a theoretical guarantee of success. 
\item It would be nice to have a similar result for the versions of cross-validation that are actually used in practice. The main reason why we work with our version is that it is mathematically tractable. It is possible that a variant of the techniques developed in this paper, together with some new ideas, may lead to a definitive result about traditional cross-validation some day.
\end{enumerate}
The next section contains an application of Theorem \ref{mainthm} to error variance estimation. Theorem~\ref{mainthm} is proved in Section~\ref{proofsec}, and the results of Section \ref{evsec} are proved in Section \ref{corsec}.


\section{Application to error variance estimation}\label{evsec}
Error variance estimation is the problem of estimating $\sigma^2$ in the linear regression model \eqref{lin1}. In the high dimensional case, this problem has gained some prominence in recent times, partly because of the emergence of literature on significance tests for Lasso \cite{javanmardmontanari14, lockhartetal13}. Significance tests almost always require some plug-in estimate of $\sigma^2$. There are a number of proposed methods for error variance estimation in Lasso~\cite{dicker14, fanetal12, stadleretal10, sunzhang10, sunzhang12, sunzhang13}. The recent paper of Reid, Tibshirani and Friedman~\cite{reidetal13} gives a comprehensive survey of these techniques and compares their strengths and weaknesses through extensive simulation studies. They summarize their findings as follows (italics and quotation marks added):
\begin{quote}
{\it ``Despite some comforting asymptotic results, finite sample performance of these estimators seems to suffer, particularly when signals become large and non sparse. Variance estimators based on residual sums of squares with adaptively chosen regularization parameters seem to have promising finite sample properties. In particular, we recommend the cross-validation based, Lasso residual sum of squares estimator as a good variance estimator under a broad range of sparsity and signal strength assumptions. The complexity of their structure seems to have discouraged their rigorous analysis.''}
\end{quote}
Reid, Tibshirani and Friedman \cite{reidetal13} observe that it is possible to construct an error variance estimator using the cross-validation procedure of Homrighausen and McDonald \cite{hm13}, but it is consistent only when $\hat{s}/n \ra 0$, where $\hat{s}$ is the number of nonzero entries in the cross-validated Lasso estimate~$\bh$. Such a result does not follow from any known theorem in the literature.

In an attempt to address the above problems, we propose a new estimate of error variance based on our cross-validated Lasso estimate. Let $\bh^{\textup{CV}}$ be the cross-validated estimate of $\beta$ defined in Section \ref{mainsec}. Define
\[
\hat{\sigma}^2 := \frac{\|Y-X\bh^{\textup{CV}}\|^2}{n}\, .
\]
The following theorem gives an upper bound on the mean absolute error of $\hat{\sigma}^2$. It gives a theoretical proof that $\hat{\sigma}^2$ is a good estimator of $\sigma^2$ under the same mild conditions as in Theorem \ref{mainthm}. Whether it will actually perform well in practice is a different question that is not addressed in this paper. 
\begin{thm}\label{sigma}
Let all notation be as in Theorem \ref{mainthm} and let $\hat{\sigma}^2$ be defined as above. Let $R$ denote the right-hand side of the inequality in the statement of Theorem \ref{mainthm}. Then
\[
\ee(|\hat{\sigma}^2-\sigma^2|;\, N_1\delta\ge |\beta^*|_1,\, N_2\delta\ge |\beta^*|_1) \le \sigma^2 \sqrt{\frac{2}{n}} +2\sigma \sqrt{R} + R\, .
\]
\end{thm}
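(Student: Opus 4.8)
The plan is to expand the residual sum of squares around the noise vector and control the resulting cross term by Cauchy--Schwarz. Write $u := X\beta^* - X\bh^{\textup{CV}}$, so that, using $Y = X\beta^* + \ve$, we have $Y - X\bh^{\textup{CV}} = \ve + u$ and hence
\[
\hat{\sigma}^2 - \sigma^2 = \biggl(\frac{\|\ve\|^2}{n} - \sigma^2\biggr) + \frac{2\langle \ve, u\rangle}{n} + \frac{\|u\|^2}{n}\,.
\]
Applying the triangle inequality and then multiplying through by the indicator of the event $A := \{N_1\delta\ge |\beta^*|_1,\ N_2\delta\ge |\beta^*|_1\}$, I would bound $\ee(|\hat{\sigma}^2 - \sigma^2|;\, A)$ by the sum of three expectations, one for each term on the right.

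For the first term I would simply drop the indicator (bounding $1_A\le 1$) and use Jensen's inequality to get $\ee|\|\ve\|^2/n - \sigma^2| \le \sqrt{\var(\|\ve\|^2/n)}$. Since the $\ve_i$ are i.i.d.\ $N(0,\sigma^2)$, one has $\var(\ve_i^2) = 2\sigma^4$, so this variance equals $2\sigma^4/n$ and the first term is at most $\sigma^2\sqrt{2/n}$, which is exactly the first summand in the claimed bound.

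For the third term, the key observation is that $\|u\|^2/n = \|X\beta^* - X\bh^{\textup{CV}}\|^2/n$ is precisely the quantity controlled by Theorem \ref{mainthm}, so $\ee(\|u\|^2/n;\, A) \le R$ directly. For the cross term I would apply Cauchy--Schwarz twice: first in $\rr^n$ to get $|\langle \ve, u\rangle| \le \|\ve\|\,\|u\|$, and then in the probabilistic sense,
\[
\ee\biggl(\frac{2\|\ve\|\,\|u\|}{n};\, A\biggr) \le 2\sqrt{\ee\biggl(\frac{\|\ve\|^2}{n}\biggr)}\,\sqrt{\ee\biggl(\frac{\|u\|^2}{n};\, A\biggr)} \le 2\sqrt{\sigma^2}\,\sqrt{R} = 2\sigma\sqrt{R}\,,
\]
using $\ee(\|\ve\|^2/n) = \sigma^2$ and the bound $\ee(\|u\|^2/n;\, A)\le R$ a second time. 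Summing the three contributions yields exactly $\sigma^2\sqrt{2/n} + 2\sigma\sqrt{R} + R$.

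There is no real obstacle here once Theorem \ref{mainthm} is in hand: the entire argument is the decomposition above together with two elementary Cauchy--Schwarz steps and the standard Gaussian variance computation. The only point requiring a little care is the bookkeeping of the indicator $1_A$ --- keeping it attached to the $\|u\|^2$ factor throughout (and discarding it where harmless) so that both the cross term and the quadratic term invoke the conditional prediction-error bound $R$, rather than an unconditional prediction error that the hypotheses do not supply.
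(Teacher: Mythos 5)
Your proposal is correct and follows essentially the same route as the paper's own proof: the identical three-term expansion of $\|Y-X\bh^{\textup{CV}}\|^2$ around $\ve$, Jensen's inequality for the $\chi^2$-type term, and the double Cauchy--Schwarz argument with the indicator $1_A$ kept on the prediction-error factor so that Theorem \ref{mainthm} applies. Nothing is missing; the bookkeeping of the indicator is handled exactly as in the paper.
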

The following corollary demonstrates a simple scenario under which  $\hat{\sigma}^2$ is a consistent estimate of $\sigma^2$. Note that Theorem \ref{sigma} is more general than this illustrative corollary. 
\begin{cor}\label{sigmacor}
Consider a sequence of regression problems and estimates of the type analyzed in Theorems \ref{mainthm} and \ref{sigma}. Suppose that $\sigma^2$ is the same in each problem, but $n\ra\infty$ and all other quantities change with $n$. Assume that:
\begin{enumerate}[(i)]
 \item The entries of the design matrix and the $\ell^1$ norm of the true parameter vector $\beta^*$ remain uniformly bounded as $n\ra\infty$. 
 \item $N_1\delta$ and $N_2\delta$ tend to infinity in probability, but $\delta$ remains bounded.
 \item $\log N_1$, $\log N_2$ and $\log p$ grow at most like $o(n)$. 
\end{enumerate}
Then $\hat{\sigma}^2$ is a consistent estimate of $\sigma^2$ in this sequence of problems. 
\end{cor}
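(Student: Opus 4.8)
The plan is to read consistency off Theorem~\ref{sigma} once I verify that the bound $R$ of Theorem~\ref{mainthm} tends to zero, and then to pass from the conditional bound (which holds only on the event $A:=\{N_1\delta\ge|\beta^*|_1,\,N_2\delta\ge|\beta^*|_1\}$) to an unconditional statement via Markov's inequality. First I would check $R\to0$. By assumption~(i) the entries of $X$ are uniformly bounded, say $|x_{ij}|\le b$, so $M=\max_j\frac1n\sum_i x_{ij}^4\le b^4$ stays bounded; moreover $|\beta^*|_1$ is bounded and, by~(ii), $\delta$ is bounded, hence $L=|\beta^*|_1+\delta$ is bounded. Consequently $C_1$ and $C_2$ remain bounded as $n\to\infty$. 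Assumption~(iii), read as $l_1=o(n)$, $l_2=o(n)$ and $\log(2p)=o(n)$, makes the first two terms of $R$ vanish, and the exponentially small term $E_n\to0$ because its prefactor is bounded (again $L,M,\sigma$ bounded) while the geometric factor $\bigl((1+2^{-1/2})/2\bigr)^{n/2}\to0$. Thus $R\to0$.

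Since $\sqrt{2/n}\to0$ and $R\to0$, Theorem~\ref{sigma} gives
\[
\ee\bigl(|\hat\sigma^2-\sigma^2|;\,A\bigr)\le \sigma^2\sqrt{\tfrac{2}{n}}+2\sigma\sqrt{R}+R\longrightarrow0 .
\]
Hence, by Markov's inequality, $\pp(|\hat\sigma^2-\sigma^2|>\eta,\,A)\le\eta^{-1}\,\ee(|\hat\sigma^2-\sigma^2|;A)\to0$ for every fixed $\eta>0$.

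It remains to dispose of $A^c$. Since $|\beta^*|_1\le B$ for some constant $B$ by~(i), and $N_1\delta,N_2\delta\to\infty$ in probability by~(ii), we get $\pp(A^c)\le\pp(N_1\delta<B)+\pp(N_2\delta<B)\to0$. Combining the two displays,
\[
\pp(|\hat\sigma^2-\sigma^2|>\eta)\le\pp(|\hat\sigma^2-\sigma^2|>\eta,\,A)+\pp(A^c)\longrightarrow0
\]
for every $\eta>0$, which is exactly the asserted consistency. I do not anticipate a serious obstacle: the argument is essentially bookkeeping once Theorems~\ref{mainthm} and~\ref{sigma} are in hand. The only points requiring care are the decomposition over $A$ and $A^c$ — needed because Theorem~\ref{sigma} controls the error only on $A$, so the contribution of $A^c$ must be handled separately through $\pp(A^c)\to0$ supplied by~(ii) — and the reading of assumption~(iii) that makes the random quantities $l_i=\ee\log(N_i+1)$ genuinely $o(n)$.
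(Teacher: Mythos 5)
Your proposal is correct and follows essentially the same route as the paper's own proof: verify that $C_1$, $C_2$ stay bounded and $E_n\to 0$ under (i)--(ii), use (iii) to kill the $\sqrt{l_1/n}$, $\sqrt{l_2/n}$, $\sqrt{\log(2p)/n}$ terms so that $R\to 0$, then decompose over $A$ and $A^c$, applying Markov's inequality to $\ee(|\hat{\sigma}^2-\sigma^2|;\,A)$ and using (i)--(ii) to get $\pp(A^c)\to 0$. Your explicit bookkeeping (e.g., $M\le b^4$, the bounded prefactor of $E_n$) and your flagged reading of assumption (iii) as $\ee\log(N_i+1)=o(n)$ only make explicit what the paper leaves implicit.
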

Theorem \ref{sigma} and Corollary \ref{sigmacor} are proved in Section \ref{corsec}. In the next section, we prove Theorem~\ref{mainthm}.

\section{Proof of Theorem \ref{mainthm}}\label{proofsec}
In the statement of the theorem, $L$ is defined as $|\beta^*|_1+\delta$. However in this proof, we will redefine $L$ as the smallest integer multiple of $\delta$ that is $\ge |\beta^*|_1$. It suffices to prove the theorem with this new $L$, because the old $L$ is $\ge$ the new one.

Let $\mu^* := X\beta^*$ and $\hmu := X\bh^{\textup{CV}}$. We will first work on bounding $\|\hm-\mu^*\|$ instead of $\|\hmu-\mu^*\|$. If $I^c$ is nonempty and $N_1\delta \ge |\beta^*|_1$, then by definition of $\hat{K}_1$,
$$\sum_{i\in I}(y_i-x_i\hat{\beta}^{(\hat{K}_1,2)})^2 \leq \sum_{i\in I}(y_i-x_i\hat{\beta}^{(L,2)})^2\,.$$
If $I^c$ or $I$ is empty, then equality holds, so the above inequality is true anyway. Adding and subtracting $x_i\beta^*$ inside the square on both sides gives
\begin{align*}
&\sum_{i\in I} (\ve_i^2 +2\ve_i(x_i\beta^* - x_i\hat{\beta}^{(\hat{K}_1,2)}) + (x_i\beta^* - x_i\hat{\beta}^{(\hat{K}_1,2)})^2) \\
&\le \sum_{i\in I} (\ve_i^2 +2\ve_i(x_i\beta^* - x_i\hat{\beta}^{(L,2)}) + (x_i\beta^* - x_i\hat{\beta}^{(L,2)})^2) \, .
\end{align*}
This can be rewritten as 
\begin{align}
\sum_{i\in I}(\mu_i^*-\hm_i)^2&=\sum_{i\in I}(x_i\beta^*-x_i\hat{\beta}^{(\hat{K}_1,2)})^2\notag\\
&\leq 2\sum_{i\in I}\varepsilon_i(x_i\hat{\beta}^{(\hat{K}_1,2)}-x_i\hat{\beta}^{(L,2)})+\sum_{i\in I}(x_i\beta^*-x_i\hat{\beta}^{(L,2)})^2\, .\label{mainineq}
\end{align}
A similar expression may be  obtained for $\sum_{i\in I^c}(\mu_i^*-\hm_i)^2$. Since these two quantities have the same unconditional distribution and their sum is the total prediction error $\|\mu^*-\hm\|^2$, it suffices to obtain a bound on the expectation of one of them. We start by bounding expectation of the second term in \eqref{mainineq}. Throughout the remainder of the proof, let $\ee'$ denote conditional expectation given $I$ and $\ee''$ denote the conditional expectation given $I$ and $(y_i,x_i)_{i\in I^c}$. 
\begin{lmm}\label{l1}
For each $\beta \in \rr^p$, let
\[
\varphi(\beta):=\sum_{i\in I}(x_i\beta^*-x_i\beta)^2-\sum_{i\in I^c}(x_i\beta^*-x_i\beta)^2\, .
\]
Then
\begin{align*}
\ee\biggl(\sup_{\beta\in \mathbb{R}^p\,:\,|\beta|_1\leq L}\varphi(\beta)\biggr) \leq 3L^2(2Mn\log(2p^2))^{1/2}\,.
\end{align*}
\end{lmm}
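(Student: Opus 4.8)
The plan is to view $\varphi$ as a random quadratic form whose only randomness comes from the signs produced by the partition, and then to reduce the supremum to a maximal inequality for the entries of a single random $p\times p$ matrix. First I would encode the partition by Rademacher signs: set $\eta_i:=1$ if $i\in I$ and $\eta_i:=-1$ if $i\in I^c$, so that the $\eta_i$ are i.i.d.\ with $\ee\eta_i=0$ and
\[
\varphi(\beta)=\sum_{i=1}^n\eta_i\,(x_i\beta^*-x_i\beta)^2.
\]
Introducing the symmetric random matrix with entries $Z_{jk}:=\sum_{i=1}^n\eta_i x_{ij}x_{ik}$ and expanding the square, I would split $\varphi(\beta)=A-2B(\beta)+C(\beta)$, where $A:=\sum_{j,k}\beta^*_j\beta^*_k Z_{jk}$ does \emph{not} depend on $\beta$, while $B(\beta):=\sum_{j,k}\beta^*_j\beta_k Z_{jk}$ and $C(\beta):=\sum_{j,k}\beta_j\beta_k Z_{jk}$.

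The key to getting the stated constant is that $A$ is free of $\beta$ and has mean zero, since $\ee Z_{jk}=\sum_i\ee(\eta_i)x_{ij}x_{ik}=0$; hence
\[
\ee\Bigl(\sup_{|\beta|_1\le L}\varphi(\beta)\Bigr)=\ee\Bigl(\sup_{|\beta|_1\le L}\bigl(C(\beta)-2B(\beta)\bigr)\Bigr),
\]
so we never have to pay for the $A$ term. For the remaining pieces, H\"older's inequality together with the bound $|\beta^*|_1\le L$ (valid after the redefinition of $L$ at the start of the proof) gives, for every $\beta$ with $|\beta|_1\le L$,
\[
|C(\beta)|\le\Bigl(\max_{j,k}|Z_{jk}|\Bigr)|\beta|_1^2\le L^2\max_{j,k}|Z_{jk}|,\qquad |B(\beta)|\le\Bigl(\max_{j,k}|Z_{jk}|\Bigr)|\beta^*|_1|\beta|_1\le L^2\max_{j,k}|Z_{jk}|,
\]
so that $\sup_{|\beta|_1\le L}(C-2B)\le 3L^2\max_{j,k}|Z_{jk}|$ pointwise in $\eta$.

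It then remains to bound $\ee\max_{j,k}|Z_{jk}|$, which I expect to be the main obstacle. Each $Z_{jk}$ is a sum of independent, bounded, mean-zero terms, so Hoeffding's lemma gives $\ee e^{\lambda Z_{jk}}\le\exp\bigl(\tfrac{1}{2}\lambda^2\sum_i x_{ij}^2x_{ik}^2\bigr)$, and Cauchy--Schwarz together with the definition of $M$ yields the variance proxy
\[
\sum_i x_{ij}^2x_{ik}^2\le\Bigl(\sum_i x_{ij}^4\Bigr)^{1/2}\Bigl(\sum_i x_{ik}^4\Bigr)^{1/2}\le Mn.
\]
Viewing $\max_{j,k}|Z_{jk}|$ as the maximum of the $2p^2$ sub-Gaussian variables $\pm Z_{jk}$ and running the standard exponential maximal argument (Jensen applied to $e^{\lambda\cdot\max}$, a union bound over the moment generating functions, then optimizing in $\lambda$) produces $\ee\max_{j,k}|Z_{jk}|\le(2Mn\log(2p^2))^{1/2}$. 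Combining this with the previous bound gives exactly $3L^2(2Mn\log(2p^2))^{1/2}$.

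The two places where care is required are getting the variance proxy right --- the Cauchy--Schwarz step is precisely what converts the fourth-moment quantity $M$ into the proxy $Mn$ --- and counting the entries as $2p^2$ so that the logarithmic factor emerges as $\log(2p^2)$. The zero-mean trick for the $\beta$-independent term $A$, though elementary once spotted, is what separates the sharp constant $3$ from the naive constant $4$ one would obtain by bounding $|\varphi(\beta)|\le|\beta^*-\beta|_1^2\max_{j,k}|Z_{jk}|\le 4L^2\max_{j,k}|Z_{jk}|$ directly.
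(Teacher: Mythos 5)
Your proof is correct and follows essentially the same route as the paper: the same sign-symmetrization $\eta_i$, the same three-term expansion with the $\beta$-free term eliminated by its zero mean, and the same Hoeffding-plus-union-bound maximal inequality with variance proxy $Mn$ (via Cauchy--Schwarz and the definition of $M$). The only deviation is cosmetic: the paper bounds the cross term by $2L\max_{1\le j\le p}\bigl|\sum_{i=1}^n\eta_i(x_i\beta^*)x_{ij}\bigr|$, a maximum over just $p$ sub-Gaussian variables yielding the slightly sharper intermediate bound $2L^2(2Mn\log(2p))^{1/2}$, whereas you fold that term into the single matrix maximum $\max_{j,k}|Z_{jk}|$ over $p^2$ entries, which costs a $\log(2p^2)$ in place of $\log(2p)$ at that step but needs only one union bound and produces the identical final bound $3L^2(2Mn\log(2p^2))^{1/2}$.
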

\begin{proof}
Note that we can write
$$\varphi(\beta)=\sum_{i=1}^{n}\eta_i(x_i\beta^*-x_i\beta)^2\,,$$
where
$$\eta_i=\begin{cases}1 & \textnormal{if }i\in I,\\
-1&\textnormal{if }i \in I^c.
\end{cases}$$
Note that $\eta_i$ are i.i.d.~random variables with mean zero. Write 
$$\sum_{i=1}^{n}\eta_i(x_i\beta^*-x_i\beta)^2=\sum_{i=1}^{n}\eta_i(x_i\beta^*)^2-2\sum_{i=1}^{n}\eta_i(x_i\beta^*)(x_i\beta)+\sum_{i=1}^{n}\eta_i(x_i\beta)^2\, .$$
First, observe that 
\begin{align}\label{step1}
\ee\biggl(\sum_{i=1}^{n}\eta_i(x_i\beta^*)^2\biggr)=0\, .
\end{align}
Next, note that for any $\beta$ with $|\beta|_1\le L$,  
\begin{align}
-2\sum_{i=1}^{n}\eta_i(x_i\beta^*)(x_i\beta)&=-2\sum_{j=1}^{p}\beta_j\biggl(\sum_{i=1}^n\eta_i(x_i\beta^*)x_{ij}\biggr)\nonumber\\
&\leq 2L\max_{1\leq j\leq p}\biggl|\sum_{i=1}^n\eta_i(x_i\beta^*)x_{ij}\biggr|\,.\label{eta0}
\end{align}
Lemma \ref{app1} from the Appendix implies that for any $\theta\in \rr$, 
$$\mathbb{E}\biggl(\exp\biggl(\theta\sum_{i=1}^n\eta_i(x_i\beta^*)x_{ij}\biggr)\biggr) \leq \exp\biggl(\frac{\theta^2}{2}\sum_{i=1}^n\bigl((x_i\beta^*)x_{ij}\bigr)^2\biggr)\,.$$
So by Lemma \ref{app2} of the Appendix,
\begin{align}\label{eta1}
\mathbb{E}\biggl(\max_{1\leq j\leq p}\biggl|\sum_{i=1}^n\eta_i(x_i\beta^*)x_{ij}\biggr|\biggr) &\leq \biggl(2\log(2 p)\max_{1\leq j \leq p}\sum_{i=1}^n\bigl((x_i\beta^*)x_{ij}\bigr)^2\biggr)^{1/2}\,.
\end{align}
Note that for any $j$,
\begin{align*}
\sum_{i=1}^n\bigl((x_i\beta^*)x_{ij}\bigr)^2&= \sum_{i=1}^n \biggl(\sum_{k=1}^px_{ik}x_{ij} \beta_k^*\biggr)^2 \\
&= \sum_{i=1}^n\sum_{1\le k,l\le p}x_{ik} x_{il} x_{ij}^2 \beta^*_k\beta^*_l\\
&\le |\beta^*|_1^2 \max_{1\le k,l\le p} \biggl|\sum_{i=1}^nx_{ik} x_{il} x_{ij}^2\biggr|\le n M|\beta^*|_1^2\, .
\end{align*}
Therefore by \eqref{eta1}, 
\begin{align*}
\mathbb{E}\biggl(\max_{1\leq j\leq p}\biggl|\sum_{i=1}^n\eta_i(x_i\beta^*)x_{ij}\biggr|\biggr) &\leq (2Mn|\beta^*|_1^2\log(2 p))^{1/2}\, .
\end{align*}
Using this information in \eqref{eta0}, we get
\begin{align}\label{step2}
\ee\biggl(\sup_{\beta\in \rr^p\, :\, |\beta|_1\le L}-2\sum_{i=1}^{n}\eta_i(x_i\beta^*)(x_i\beta)\biggr)&\le 2L^2(2Mn\log (2p))^{1/2}\, .
\end{align}
Similarly, for any $\beta$ with $|\beta|_1\le L$, 
\begin{align}
\sum_{i=1}^{n}\eta_i(x_i\beta)^2&=\sum_{j,k=1}^p\beta_j\beta_k\biggl(\sum_{i=1}^n\eta_ix_{ij}x_{ik}\biggr)\nonumber\\
&\leq L^2\max_{1\leq j, k\leq p}\biggl|\sum_{i=1}^n\eta_ix_{ij}x_{ik}\biggr|\, .\label{eta2}
\end{align}
By Lemma \ref{app1} from the Appendix, we have that for any $\theta\in \rr$, 
$$\mathbb{E}\biggl(\exp\biggl(\theta\sum_{i=1}^n\eta_ix_{ij}x_{ik}\biggr) \biggr)\leq \exp\biggl(\frac{\theta^2}{2}\sum_{i=1}^n\bigl(x_{ij}x_{ik}\bigr)^2\biggr)\, .$$ 
Therefore by Lemma \ref{app2} of the Appendix, 
$$\mathbb{E}\biggl(\max_{1\leq j,k\leq p}\biggl|\sum_{i=1}^n\eta_ix_{ij}x_{ik}\biggr|\biggr) \leq \biggl(2\log (2p^2)\max_{1\leq j,k \leq p}\sum_{i=1}^n\bigl(x_{ij}x_{ik}\bigr)^2\biggr)^{1/2}\le (2Mn\log (2p^2))^{1/2}\,.$$
Therefore by \eqref{eta2},
\begin{align}\label{step3}
\mathbb{E}\biggl(\sup_{\beta\in \rr^p\, :\, |\beta|_1\le L}\sum_{i=1}^{n}\eta_i(x_i\beta)^2\biggr) \leq L^2(2Mn\log(2p^2))^{1/2}\,.
\end{align}
By combining \eqref{step1}, \eqref{step2} and \eqref{step3} we get the desired result.
\end{proof}

\begin{lmm}\label{l2}
\[
\mathbb{E}\biggl(\sum_{i\in I^c}(x_i\beta^*-x_i\hat{\beta}^{(L,2)})^2\biggr) \leq 2L\sigma (2M^{1/2}n\log(2p))^{1/2}\, .
\]
\end{lmm}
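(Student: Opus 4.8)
The plan is to start from the defining optimality property of $\hat{\beta}^{(L,2)}$. Since in this section $L$ is the smallest integer multiple of $\delta$ with $L\ge |\beta^*|_1$, the true vector $\beta^*$ is feasible for the constrained problem that $\hat{\beta}^{(L,2)}$ solves, namely $\min_{|\beta|_1\le L}\sum_{i\in I^c}(y_i-x_i\beta)^2$. (If $I^c=\varnothing$ the asserted inequality is trivial, since the left-hand side is an empty sum equal to $0$, so I may assume $I^c\ne\varnothing$.) Comparing the value at the minimizer $\hat{\beta}^{(L,2)}$ with the value at the feasible point $\beta^*$ gives the basic inequality
\[
\sum_{i\in I^c}(y_i-x_i\hat{\beta}^{(L,2)})^2 \le \sum_{i\in I^c}(y_i-x_i\beta^*)^2\,.
\]
Writing $y_i=x_i\beta^*+\ve_i$, expanding both squares, and cancelling the common term $\sum_{i\in I^c}\ve_i^2$ exactly as in the derivation of \eqref{mainineq}, I would reduce this to
\[
D:=\sum_{i\in I^c}(x_i\beta^*-x_i\hat{\beta}^{(L,2)})^2 \le 2\sum_{i\in I^c}\ve_i\,x_i(\hat{\beta}^{(L,2)}-\beta^*)\,.
\]

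Next, and this is the step that produces the sharp constant, I would split the right-hand side as $2\sum_{i\in I^c}\ve_i x_i\hat{\beta}^{(L,2)}-2\sum_{i\in I^c}\ve_i x_i\beta^*$ and then take expectations. Because $\beta^*$ is deterministic while $\ve$ is mean-zero and independent of the random partition, $\ee\bigl(\sum_{i\in I^c}\ve_i x_i\beta^*\bigr)=0$, so this second piece drops out entirely and I am left with $\ee(D)\le 2\,\ee\bigl(\sum_{i\in I^c}\ve_i x_i\hat{\beta}^{(L,2)}\bigr)$. By H\"older's inequality ($\ell^1$ against $\ell^\infty$) together with the constraint $|\hat{\beta}^{(L,2)}|_1\le L$,
\[
\sum_{i\in I^c}\ve_i x_i\hat{\beta}^{(L,2)}=\sum_{j=1}^p \hat{\beta}^{(L,2)}_j\sum_{i\in I^c}\ve_i x_{ij}\le L\max_{1\le j\le p}\biggl|\sum_{i\in I^c}\ve_i x_{ij}\biggr|\,.
\]

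To control the expected maximum I would condition on $I$ and work with $\ee'$. Given $I$, each $\sum_{i\in I^c}\ve_i x_{ij}$ is a centered Gaussian, so Lemma \ref{app1} furnishes the sub-Gaussian bound $\ee'\exp\bigl(\theta\sum_{i\in I^c}\ve_i x_{ij}\bigr)\le \exp\bigl(\tfrac{\theta^2}{2}\sigma^2\sum_{i\in I^c}x_{ij}^2\bigr)$, and Lemma \ref{app2} then yields
\[
\ee'\biggl(\max_{1\le j\le p}\biggl|\sum_{i\in I^c}\ve_i x_{ij}\biggr|\biggr)\le \biggl(2\log(2p)\,\sigma^2\max_{1\le j\le p}\sum_{i\in I^c}x_{ij}^2\biggr)^{1/2}\,.
\]
Finally I would remove the dependence on the random partition by Cauchy--Schwarz, $\sum_{i\in I^c}x_{ij}^2\le\sum_{i=1}^n x_{ij}^2\le \bigl(\sum_{i=1}^n x_{ij}^4\bigr)^{1/2}n^{1/2}\le M^{1/2}n$, which is a deterministic bound valid for every $I$; combining this with the two previous displays and taking the outer expectation over $I$ gives $\ee(D)\le 2L\sigma(2M^{1/2}n\log(2p))^{1/2}$, as claimed.

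The one genuinely non-routine point, and the thing I would flag as the main obstacle, is the order of operations in the cross term: the naive pointwise estimate through $|\hat{\beta}^{(L,2)}-\beta^*|_1\le 2L$ loses a factor of two, whereas passing to the expectation \emph{before} bounding lets one discard the mean-zero $\beta^*$-part and retain the optimal constant $2L$. A secondary bookkeeping point is that the partition $I$ is random, so the Gaussian maximal inequality must be applied conditionally (through $\ee'$), and only afterwards may the partition-free bound $\sum_{i\in I^c}x_{ij}^2\le M^{1/2}n$ be invoked.
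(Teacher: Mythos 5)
Your proof is correct and follows essentially the same route as the paper's: the optimality comparison with the feasible point $\beta^*$, discarding the mean-zero term $\sum_{i\in I^c}\ve_i x_i\beta^*$ by taking expectations before bounding (the paper does this via the conditional expectation $\ee'$ given $I$), H\"older against the $\ell^1$ constraint to reduce to $L\max_j|\sum_{i\in I^c}\ve_i x_{ij}|$, the maximal inequality of Lemma \ref{app2} applied conditionally on $I$, and the Cauchy--Schwarz bound $\sum_{i\in I^c}x_{ij}^2\le M^{1/2}n$. One small correction: Lemma \ref{app1} requires almost surely bounded summands, so it cannot be invoked for the Gaussian sums $\sum_{i\in I^c}\ve_i x_{ij}$; no such appeal is needed, however, since a centered Gaussian satisfies the moment-generating-function hypothesis of Lemma \ref{app2} exactly (with variance $\sigma^2\sum_{i\in I^c}x_{ij}^2$), which is how the paper proceeds.
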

\begin{proof}
The inequality is trivially true if $I^c$ is empty. So let us assume that $I^c$ is nonempty. By definition of $\bh^{(L,2)}$, 
\[
\sum_{i\in I^c}(y_i - x_i \bh^{(L,2)})^2 \le \sum_{i\in I^c}(y_i - x_i \beta^*)^2\, .
\]
Adding and subtracting $x_i\beta^*$ inside the bracket on the left, this becomes
\[
\sum_{i\in I^c}(\ve_i^2 +2\ve_i(x_i\beta^* - x_i \bh^{(L,2)}) + (x_i\beta^* - x_i \bh^{(L,2)})^2) \le \sum_{i\in I^c}\ve_i^2\,,
\]
which is the same as
\[
\sum_{i\in I^c}(x_i\beta^* - x_i \bh^{(L,2)})^2\le 2\sum_{i\in I^c} \ve_i(x_i\bh^{(L,2)} - x_i\beta^*)\, .
\]
Since $\ee'(\ve_i) = 0$ for all $i$ and $|\beta^{(L,2)}|_1\le L$, this gives
\begin{equation}\label{o1}
\ee'\biggl(\sum_{i\in I^c}(x_i\beta^* - x_i \bh^{(L,2)})^2\biggr) \le 2\,\ee'\biggl(\sup_{\beta\in \rr^p\, :\, |\beta|_1\le L}\sum_{i\in I^c} \ve_i x_i\beta\biggr)\, .
\end{equation}
For any $\beta$ such that $|\beta|_1\le L$, 
\begin{align}
\sum_{i\in I^c} \ve_i x_i\beta &= \sum_{i\in I^c} \sum_{j=1}^p \ve_i x_{ij} \beta_j \nonumber\\
&\le L\max_{1\le j\le p} \biggl|\sum_{i\in I^c} \ve_ix_{ij}\biggr|\, .\label{o2}
\end{align}
By Lemma \ref{app2} from the Appendix, 
\begin{align}\label{o3}
\ee'\biggl(\max_{1\le j\le p} \biggl|\sum_{i\in I^c} \ve_ix_{ij}\biggr|\biggr)\le \sigma \biggl(2\log (2p) \max_{1\le j\le p}\sum_{i\in I^c} x_{ij}^2\biggr)^{1/2}\le\sigma (2M^{1/2}n\log (2p))^{1/2}\, .
\end{align}
The desired result is obtained by combining \eqref{o1}, \eqref{o2} and \eqref{o3}, and taking unconditional expectation on both sides. 
\end{proof}

\begin{lmm}\label{mainlmm1}
\begin{align*}
\mathbb{E}\biggl(\sum_{i\in I}(x_i\beta^*-x_i\hat{\beta}^{(L,2)})^2\biggr)&\leq 3L^2(2Mn\log(2p^2))^{1/2} + 2L\sigma (2M^{1/2}n\log(2p))^{1/2}\,.
\end{align*}
\end{lmm}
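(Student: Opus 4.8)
The plan is to combine Lemmas \ref{l1} and \ref{l2} through a single algebraic decomposition, with essentially no additional work. The quantity to be bounded, $\sum_{i\in I}(x_i\beta^*-x_i\hat{\beta}^{(L,2)})^2$, is a sum over the index set $I$, whereas the two available lemmas control, respectively, the \emph{difference} $\varphi$ of the sums over $I$ and $I^c$ (Lemma \ref{l1}) and the sum over $I^c$ alone (Lemma \ref{l2}). The identity that bridges these is immediate from the definition of $\varphi$: for every $\beta\in\rr^p$,
\[
\sum_{i\in I}(x_i\beta^*-x_i\beta)^2 = \varphi(\beta)+\sum_{i\in I^c}(x_i\beta^*-x_i\beta)^2\,.
\]

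First I would specialize this identity to the random vector $\beta=\hat{\beta}^{(L,2)}$. The crucial structural fact is that $\hat{\beta}^{(L,2)}$ is a minimizer over the constraint set $\{|\beta|_1\le L\}$ (and equals $0$, hence still lies in this set, in the degenerate case that $I^c$ is empty), so $|\hat{\beta}^{(L,2)}|_1\le L$ holds with probability one. Consequently $\varphi(\hat{\beta}^{(L,2)})\le \sup_{|\beta|_1\le L}\varphi(\beta)$ pointwise, and the identity yields the pointwise bound
\[
\sum_{i\in I}(x_i\beta^*-x_i\hat{\beta}^{(L,2)})^2 \le \sup_{|\beta|_1\le L}\varphi(\beta)+\sum_{i\in I^c}(x_i\beta^*-x_i\hat{\beta}^{(L,2)})^2\,.
\]
Taking unconditional expectations and invoking the two lemmas term by term — Lemma \ref{l1} for the supremum and Lemma \ref{l2} for the sum over $I^c$ — and adding the two contributions gives exactly the claimed inequality. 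No independence between the two terms is needed, since the decomposition is pointwise and only linearity of expectation is used afterward.

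The only step that really requires an argument is the passage from $\varphi(\hat{\beta}^{(L,2)})$ to $\sup_{|\beta|_1\le L}\varphi(\beta)$. Here $\hat{\beta}^{(L,2)}$ is random and in particular correlated with the split variables $\eta_i$ entering $\varphi$, so one cannot naively fix a single $\beta$ and apply an expectation bound; the point is that replacing the random argument by the supremum over the deterministic $\ell^1$-ball decouples this dependence at the harmless cost of an inequality, precisely because the constrained minimizer is guaranteed to live in that ball. This is exactly why Lemma \ref{l1} was formulated as a uniform bound over $\{|\beta|_1\le L\}$ rather than for a fixed $\beta$, and it is the reason the proof reduces to a one-line combination once the decomposition is in place.
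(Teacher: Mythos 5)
Your proof is correct and is essentially identical to the paper's own argument: both decompose the sum over $I$ as $\varphi(\hat{\beta}^{(L,2)})$ plus the sum over $I^c$, use $|\hat{\beta}^{(L,2)}|_1\le L$ to replace $\varphi(\hat{\beta}^{(L,2)})$ by the supremum over the $\ell^1$-ball, and then apply Lemmas \ref{l1} and \ref{l2} after taking expectations. Your added remark about why the supremum (rather than a fixed-$\beta$ bound) is needed to handle the dependence on the split variables is a fair elaboration of what the paper leaves implicit, but it does not change the route.
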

\begin{proof}
Since $|\beta^*|_1$ and $|\bh^{(L,2)}|_1$ are both bounded by $L$, therefore
$$\sum_{i\in I}(x_i\beta^*-x_i\hat{\beta}^{(L,2)})^2 \leq \sum_{i\in I^c}(x_i\beta^*-x_i\hat{\beta}^{(L,2)})^2+\sup_{\beta\in \mathbb{R}^p\, :\, |\beta|_1\leq L}\varphi(\beta)\,.$$
Using Lemma \ref{l1} to bound the first term on the right, and Lemma \ref{l2} to bound the second, we get the desired result. 
\end{proof}
Next we bound the first term in \eqref{mainineq}.
\begin{lmm}\label{mainlmm2}
\begin{align*}
\mathbb{E}\biggl(\sum_{i \in I}\varepsilon_i(x_i\hat{\beta}^{(\hat{K}_1,2)}-x_i\hat{\beta}^{(L,2)})\biggr) &\leq (16\sigma^4n +8L^2M^{1/2}\sigma^2n)^{1/2} (\ee(\log (N_1+1)))^{1/2}\\
&\quad  + (n(n+5)\sigma^4 + n(n+1)\sigma^2L^2M^{1/2})^{1/2}\biggl(\frac{1+2^{-1/2}}{2}\biggr)^{n/2}\, . 
\end{align*}
\end{lmm}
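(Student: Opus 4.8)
The plan is to control this cross term by combining the defining optimality of $\hat{K}_1$ with a Gaussian maximal inequality, the essential trick being to first use that optimality to bound the size of the selected predictor on the block $I$. Throughout I work under $\ee''$, the conditional expectation given $I$ and $(y_i,x_i)_{i\in I^c}$; under this conditioning each $\hat{\beta}^{(K,2)}$, the integer $N_1$, and $\hat{\beta}^{(L,2)}$ are all fixed, and the only remaining randomness is the vector $\varepsilon_I:=(\varepsilon_i)_{i\in I}$ of i.i.d.\ $N(0,\sigma^2)$ variables. The key observation is that, since $0$ belongs to the grid $\{0,\delta,\ldots,N_1\delta\}$ and $\hat{\beta}^{(0,2)}=0$, minimality of $\hat{K}_1$ forces $\sum_{i\in I}(y_i-x_i\hat{\beta}^{(\hat{K}_1,2)})^2\le\sum_{i\in I}y_i^2=:\|y_I\|^2$; expanding the square and using Cauchy--Schwarz gives $(\sum_{i\in I}(x_i\hat{\beta}^{(\hat{K}_1,2)})^2)^{1/2}\le 2\|y_I\|$. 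Thus cross-validation automatically keeps the fitted values of the selected estimator on $I$ comparable to $\|y_I\|$, no matter how large $\hat{K}_1$ is. This is exactly what is needed, because for large $K$ the vector $\hat{\beta}^{(K,2)}$ and its predictions on $I$ may otherwise be arbitrarily large.

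Write $Z_m:=\sum_{i\in I}\varepsilon_i(x_i\hat{\beta}^{(m\delta,2)}-x_i\hat{\beta}^{(L,2)})$ and let $u^{(m)}$ denote the vector with entries $x_i\hat{\beta}^{(m\delta,2)}-x_i\hat{\beta}^{(L,2)}$, $i\in I$, so that $Z_m=\langle\varepsilon_I,u^{(m)}\rangle$ and the quantity to be bounded is $\ee''Z_{m_1}$ with $m_1=\hat{K}_1/\delta$. By the previous paragraph, $\|u^{(m_1)}\|\le 2\|y_I\|+b$ with $b:=(\sum_{i\in I}(x_i\hat{\beta}^{(L,2)})^2)^{1/2}$, so I would bound
\[
Z_{m_1}\le\Bigl(\max_{m\,:\,u^{(m)}\ne 0}\bigl\langle\varepsilon_I,\,u^{(m)}/\|u^{(m)}\|\bigr\rangle\Bigr)_{+}\,(2\|y_I\|+b)=:W\cdot R.
\]
The point of the normalization is that each $\langle\varepsilon_I,u^{(m)}/\|u^{(m)}\|\rangle$ is $N(0,\sigma^2)$, so $W$ is the positive part of a maximum of at most $N_1+1$ Gaussians with a single, uniform variance proxy $\sigma^2$; a maximal inequality of the type in Lemma~\ref{app2} then yields $\ee''W^2\lesssim\sigma^2\log(N_1+1)$ up to a lower-order piece. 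A conditional Cauchy--Schwarz gives $\ee''Z_{m_1}\le(\ee''W^2)^{1/2}(\ee''R^2)^{1/2}$, and $\ee''R^2$ is controlled through $\ee''\|y_I\|^2=\sum_{i\in I}(x_i\beta^*)^2+|I|\sigma^2$ together with the crude estimate $b^2\le L^2\sum_{i\in I}\max_j x_{ij}^2$, both of which are bounded using $M$ and $L$.

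It remains to remove the conditioning. Since $N_1$ is measurable with respect to the $I^c$-block, a second Cauchy--Schwarz over that randomness separates the count from the magnitude and produces the factor $(\ee\log(N_1+1))^{1/2}$ multiplying $(16\sigma^4 n+8L^2M^{1/2}\sigma^2 n)^{1/2}$, which is the first term in the statement; here $\ee\|y_I\|^2=\tfrac12(\|X\beta^*\|^2+n\sigma^2)$ supplies the $\sigma^4 n$ and $L^2M^{1/2}\sigma^2 n$ contributions once the magnitude factor $R\le 2\|y_I\|+b$ is squared.

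The main obstacle is the lower-order residue left by the maximal inequality. The clean bound $\ee''W^2\lesssim\sigma^2\log(N_1+1)$ cannot hold with no additive constant when $N_1$ is small (for $N_1=0$ the right side is $0$ while $W=(\,\langle\varepsilon_I,u^{(0)}/\|u^{(0)}\|\rangle\,)_+$ is genuinely positive), so this residue must be extracted and shown to be exponentially small rather than of the order of the main term. The route is to bound the residual contribution by the crude inequality $\max_m\langle\varepsilon_I,u^{(m)}/\|u^{(m)}\|\rangle^2\le\|\varepsilon_I\|^2=\sum_{i\in I}\varepsilon_i^2$ and then to gain smallness from a Chernoff lower-tail control of $\sum_{i\in I}\varepsilon_i^2$: since $\ee(\exp(-\varepsilon_i^2/(2\sigma^2)))=2^{-1/2}$ and the split is independent of the noise, $\ee(\exp(-\tfrac{1}{2\sigma^2}\sum_{i\in I}\varepsilon_i^2))=\bigl(\tfrac{1+2^{-1/2}}{2}\bigr)^{n}$, whose square root is precisely the factor in $E_n$, while a fourth-moment bound on the noise (paired with it through Cauchy--Schwarz) supplies the prefactor $(n(n+5)\sigma^4+n(n+1)\sigma^2L^2M^{1/2})^{1/2}$. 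The delicate part is this bookkeeping of the residual and the verification that the self-normalization genuinely decouples the grid size $N_1$ from the random magnitude $\|y_I\|$.
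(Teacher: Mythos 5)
The first half of your argument coincides with the paper's: the optimality-of-$\hat K_1$ trick giving $\bigl(\sum_{i\in I}(x_i\hat\beta^{(\hat K_1,2)})^2\bigr)^{1/2}\le 2\bigl(\sum_{i\in I}y_i^2\bigr)^{1/2}$ is exactly \eqref{x1}, your conditioning is the paper's $\ee''$, and the final Cauchy--Schwarz extracting $(\ee\log(N_1+1))^{1/2}$ mirrors \eqref{wk3}. The gap is in the central device. By self-normalizing and writing $Z_{m_1}\le W\cdot R$ with \emph{both} factors random, you are forced, via conditional Cauchy--Schwarz, to bound the second moment $\ee''W^2$ of a Gaussian maximum, and any such bound carries an irreducible additive term of order $\sigma^2$: already for $N_1=0$ one has $\ee''W^2=\sigma^2/2$ exactly (positive part of a single $N(0,\sigma^2)$ variable), while $\ee''R^2\ge 4\,\ee''\sum_{i\in I}y_i^2=4\sum_{i\in I}\bigl((x_i\beta^*)^2+\sigma^2\bigr)$ is of order $n$. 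So the residue you must control is of order $\sigma\sqrt{n}$ --- polynomially large, comparable to the main term --- and it is not supported on any rare event, so there is nothing against which to multiply an exponentially small probability. Your proposed repair (bound the residue by $\|\varepsilon_I\|^2$ and invoke $\ee\exp(-\tfrac{1}{2\sigma^2}\sum_{i\in I}\varepsilon_i^2)=((1+2^{-1/2})/2)^n$) is not a valid move: that lower-tail quantity never enters any bound for $\ee''(WR)$; an expectation cannot be discounted by an exponentially small factor merely because such a factor can be computed. In particular your route cannot prove the lemma as stated: for $N_1\equiv 0$ the lemma's right side is exponentially small, and the left side is then exactly $-\ee\bigl(\sum_{i\in I}\varepsilon_i x_i\hat\beta^{(L,2)}\bigr)=0$, whereas your bound is at least of order $\sigma\sqrt n$. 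The paper avoids second moments altogether: it truncates the magnitude at the threshold $m=16|I|\sigma^2+8\sum_{i\in I}(x_i\beta^*)^2$, which is \emph{deterministic} under $\ee''$, applies the first-moment maximal inequality (Lemma \ref{app2}, which has no additive constant and gives $0$ when the grid is a singleton) to the Gaussians $W(K)$, $K\in\mathcal{K}'$, whose variances are all $\le m\sigma^2$, and handles the complementary event --- which by \eqref{x1} is contained in $\{4\sum_{i\in I}y_i^2>m\}$ --- through an \emph{upper}-tail Chernoff bound on $\sum_{i\in I}y_i^2$ (Lemma \ref{app4}). That containment is the true origin of the factor $((1+2^{-1/2})/2)^{n/2}$, not a lower tail of $\|\varepsilon_I\|^2$.

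A secondary but genuine error: the estimate $b^2\le L^2\sum_{i\in I}\max_j x_{ij}^2$ cannot be controlled by $L^2M^{1/2}n$. Putting $\max_j$ inside the sum over $i$ can lose a factor as large as $\sqrt p$: with $p=n$ and $x_{ij}=a\,1_{\{i=j\}}$ one gets $\sum_i\max_j x_{ij}^2=na^2$ while $nM^{1/2}=\sqrt n\,a^2$. You must keep the quadratic form, as the paper does in \eqref{mbd}: $b^2\le L^2\max_{1\le j,k\le p}\bigl|\sum_{i}x_{ij}x_{ik}\bigr|\le L^2\max_{1\le j\le p}\sum_i x_{ij}^2\le L^2M^{1/2}n$, applying Cauchy--Schwarz across $i$ \emph{before} maximizing over column indices.
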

\begin{proof}
For each $K$, let
$$W(K):=\sum_{i \in I}\varepsilon_ix_i\hat{\beta}^{(K,2)}\, .$$
Note that by conditional independence of $(\varepsilon_i)_{i\in I}$ and $\hat{\beta}^{(L,2)}$ given $I$, we know that the conditional distribution of $W(L)$ given $I$ and $(\varepsilon_i)_{i\in I^c}$ is normal with mean zero and variance $\sum_{i \in I}(x_i\hat{\beta}^{(L,2)})^2$. In particular,
$$\mathbb{E}\biggl(\sum_{i \in I}\varepsilon_ix_i\hat{\beta}^{(L,2)}\biggl)=\mathbb{E}\biggl(\ee''\biggl(\sum_{i \in I}\varepsilon_ix_i\hat{\beta}^{(L,2)}\biggr)\biggr) = 0\,.$$
(Note that this holds true even if $I$ or $I^c$ is empty.) Hence, to prove the lemma, it is enough to show that the right-hand side is an upper bound for the expectation of $W(\hat{K}_1)$. 

Let $\ck := \{0,\delta,2\delta,\ldots, N_1\delta\}$. Then 
\begin{align}\label{wk1}
W(\hat{K}_1) \leq \max_{K \in \mathcal{K}}W(K)\,.
\end{align}
We know that $\hat{K}_1$ minimizes $\sum_{i \in I}(y_i-x_i\hat{\beta}^{(K,2)})^2$ among all $K\in \mathcal{K}$. Therefore, in particular,
$$\sum_{i \in I}(y_i-x_i\hat{\beta}^{(\hat{K}_1,2)})^2 \leq \sum_{i \in I}(y_i-x_i\hat{\beta}^{(0,2)})^2=\sum_{i \in I}y^2_i$$
since $\hat{\beta}^{(0,2)}=0$. This implies that
\begin{align}\label{x1}
\biggl(\sum_{i \in I}(x_i\hat{\beta}^{(\hat{K}_1,2)})^2\biggr)^{1/2} \leq \biggl( \sum_{i \in I}y_i^2\biggr)^{1/2}+\biggl(\sum_{i \in I}(y_i-x_i\hat{\beta}^{(\hat{K}_1,2)})^2 \biggr)^{1/2} \leq 2\biggl( \sum_{i \in I}y_i^2\biggr)^{1/2}\, .
\end{align}
Consequently,
\begin{align}\label{x2}
\biggl|\sum_{i \in I}\varepsilon_ix_i\hat{\beta}^{(\hat{K}_1,2)}\biggr|&\leq \biggl(\sum_{i \in I}\varepsilon_i^2	\sum_{i \in I}(x_i\hat{\beta}^{(\hat{K}_1,2)})^2\biggr)^{1/2} \leq 2\biggl(\sum_{i \in I}\varepsilon_i^2	\sum_{i \in I}y_i^2\biggr)^{1/2}\,.
\end{align}
Let $m:=16|I|\sigma^2+8\sum_{i\in I}(x_i\beta^*)^2$, and 
$$\mathcal{K}':=\biggl\{K\in \mathcal{K}:\sum_{i\in I}(x_i\hat{\beta}^{(K,2)})^2\leq m\biggr\}\, .$$
Then by \eqref{wk1}, at least one of the two following inequalities must hold:
\begin{align*}
&W(\hat{K}_1) \leq \max_{K \in \mathcal{K}'}W(K), \\
&\sum_{i\in I} (x_i \bh^{(\hk_1,2)})^2 > m\,.
\end{align*}
In the latter case, \eqref{x1} implies that
\[
4\sum_{i\in I} y_i^2 > m\,,
\]
and \eqref{x2} implies that
\[
W(\hk_1) \le 2\biggl(\sum_{i\in I} \ve_i^2\sum_{i\in I} y_i^2\biggr)^{1/2}\,.
\]
Thus, 
\begin{align}\label{wk2}
W(\hk_1) &\le \max_{K \in \mathcal{K}'}W(K) + 2\biggl(\sum_{i\in I} \ve_i^2\sum_{i\in I} y_i^2\biggr)^{1/2}1_{\{4\sum_{i\in I} y_i^2 > m\}}\,,
\end{align}
where $1_A$ is our notation for the indicator of an event $A$.

If we condition on $I$ and $(\varepsilon_i)_{i\in I^c}$ then $m$ and $(\hat{\beta}^{(K,2)})_{K\in \mathcal{K}}$ become non-random but $(\varepsilon_i)_{i \in I}$ are still i.i.d. $N(0,\sigma^2)$ random variables. Thus, for each $K \in \mathcal{K}'$, $W(K)$ is conditionally a Gaussian random variable with mean zero and variance bounded by $m\sigma^2$. Therefore by Lemma \ref{app2} of the Appendix, 
\begin{equation*}
\mathbb{E}''\bigl(\max_{K \in \mathcal{K}'}W(K)\bigr) \leq \sigma \sqrt{2m\log |\mathcal{K}'|}\leq \sigma \sqrt{2m\log |\mathcal{K}|} \, .
\end{equation*}
Taking unconditional expectation gives 
\begin{align}
\mathbb{E}\bigl(\max_{K \in \mathcal{K}'}W(K)\bigr)&\leq \sigma\sqrt{2}\ee(\sqrt{m\log |\mk|})\nonumber\\
&\le \sigma \sqrt{2}(\ee(m) \ee(\log |\ck|))^{1/2}\,.\label{wk3}
\end{align}
Note that
\begin{align}
\ee(m) &= 8n\sigma^2 + 4\sum_{i=1}^n (x_i \beta^*)^2\nonumber \\
&= 8n\sigma^2 + 4\sum_{i=1}^n \sum_{1\le j,k\le p} x_{ij}x_{ik}\beta^*_j \beta^*_k\nonumber\\
&\le 8n\sigma^2 + 4|\beta^*|_1^2 \max_{1\le j,k\le p} \biggl|\sum_{i=1}^n x_{ij}x_{ik}\biggr|\nonumber\\
&\le 8n\sigma^2 + 4L^2 \max_{1\le j\le p} \sum_{i=1}^n x_{ij}^2\nonumber\\
&\le 8n\sigma^2 + 4L^2M^{1/2}n \,.\label{mbd}
\end{align}
On the other hand,
\[
\log |\ck|=\log(N_1+1)\,.
\]
Therefore by \eqref{wk3},
\begin{align}\label{ww1}
\mathbb{E}\bigl(\max_{K \in \mathcal{K}'}W(K)\bigr) &\leq  (16\sigma^4n +8L^2M^{1/2}\sigma^2n)^{1/2} (\ee(\log (N_1+1)))^{1/2}\,.
\end{align}
Now we are left to control the second term on the right hand side of \eqref{wk2}. If we call it $S$ then
\begin{align}\label{w0}
(\mathbb{E}(S))^2&\leq 4\,\mathbb{E}\biggl(\sum_{i \in I}\varepsilon_i^2	\sum_{i \in I}y_i^2\biggr)\mathbb{P}\biggl(4\sum_{i \in I}y_i^2> m\biggr)\, .
\end{align}
Let 
\[
\xi_i :=
\begin{cases}
1&\text{ if } i\in I\, ,\\
0&\text{ if } i\in I^c\, .
\end{cases}
\]
Then 
\begin{align}
\mathbb{E}\biggl(\sum_{i \in I}\varepsilon_i^2	\sum_{i \in I}y_i^2\biggr) &= \sum_{i,j=1}^n \ee(\xi_i \xi_j \ve_i^2y_j^2)\nonumber\\
&= \frac{1}{2}\sum_{i=1}^n \ee(\ve_i^2 y_i^2) + \frac{1}{4} \sum_{1\le i\ne j\le n} \ee(\ve_i^2)\ee(y_j^2)\nonumber\\
&= \frac{1}{2}\sum_{i=1}^n (3\sigma^4 + (x_i\beta^*)^2\sigma^2) + \frac{1}{4} \sum_{1\le i\ne j\le n} (\sigma^4 + (x_j\beta^*)^2\sigma^2)\nonumber\\
&= \frac{n(n+5)\sigma^4}{4} + \frac{(n+1)\sigma^2}{4}\sum_{i=1}^n (x_i\beta^*)^2\, .\label{last1}
\end{align}
Next, note that by Chebychev's inequality,
\begin{align}
\pp'\biggl(4\sum_{i\in I} y_i^2 > m\biggr) &\le \ee'\exp\biggl(\frac{4\sum_{i\in I} y_i^2 - m}{16\sigma^2}\biggr)\nonumber \\
&\le e^{-m/16\sigma^2} \prod_{i\in I}\ee'(e^{y_i^2/4\sigma^2})\, .\label{p1}
\end{align}
By Lemma \ref{app4} from the Appendix,
\begin{align*}
\ee'(e^{y_i^2/4\sigma^2}) &= \sqrt{2} e^{(x_i\beta^*)^2/2\sigma^2}\, .
\end{align*}
Thus,
\begin{align*}
e^{-m/16\sigma^2} \prod_{i\in I}\ee'(e^{y_i^2/4\sigma^2}) &= 2^{|I|/2} e^{-|I|}\le 2^{-|I|/2}\, .
\end{align*}
Therefore by \eqref{p1}, 
\begin{align}\label{last2}
\pp\biggl(4\sum_{i\in I} y_i^2 > m\biggr) &\le \ee(2^{-|I|/2}) = \prod_{i=1}^n \ee(2^{-\xi_i/2}) = \biggl(\frac{1+2^{-1/2}}{2}\biggr)^n\, .
\end{align}
Combining \eqref{w0}, \eqref{last1} and \eqref{last2}, we get
\begin{align}
(\ee(S) )^2 &\le (n(n+5)\sigma^4 + n(n+1)\sigma^2L^2M^{1/2})\biggl(\frac{1+2^{-1/2}}{2}\biggr)^n\, .\label{ww2}
\end{align}
The proof is completed by combining \eqref{wk1}, \eqref{ww1} and \eqref{ww2}.
\end{proof}

We are now ready to complete the proof of Theorem \ref{mainthm}.

\begin{proof}[Proof of Theorem \ref{mainthm}]
Combine inequality \eqref{mainineq} with Lemma \ref{mainlmm1} and Lemma \ref{mainlmm2} to get a bound for $\ee(\sum_{i\in I} (\hm_i-\mu_i^*)^2;\, N_1\delta\ge |\beta^*|_1)$, and add a similar term for $I^c$. This shows that 
\begin{align}
&\ee(\|\hm-\mu^*\|^2;\, N_1\delta \ge |\beta^*|_1,\, N_2\delta \ge |\beta^*|_1) \nonumber\\
&\le 6L^2(2Mn\log(2p^2))^{1/2} + 4L\sigma (2M^{1/2}n\log(2p))^{1/2}\nonumber\\
&\quad + (16\sigma^4n +8L^2M^{1/2}\sigma^2n)^{1/2} (\sqrt{l_1}+\sqrt{l_2})\nonumber\\
&\quad  + 2(n(n+5)\sigma^4 + n(n+1)\sigma^2L^2M^{1/2})^{1/2}\biggl(\frac{1+2^{-1/2}}{2}\biggr)^{n/2}\, .\label{c1}
\end{align}
 Now note that by the definition of $\hk$,
\begin{align*}
\|\mu^*-X\bh^{(\hk)}\| &\le \|\mu^*-\hm\| + \|\hm - X\bh^{(\hk)}\|\\
&\le \|\mu^*-\hm\|+\|\hm - X\bh^{(L)}\|\\
&\le 2\|\hm - \mu^*\| + \|\mu^*-X\bh^{(L)}\|\,.
\end{align*}
Thus,
\begin{align}\label{c2}
\|\mu^*-X\bh^{(\hk)}\|^2 &\le 8\|\hm - \mu^*\|^2 + 2\|\mu^*-X\bh^{(L)}\|^2\,.
\end{align}
Next observe that by the definition of $\bh^{(L)}$ and the fact that $|\beta^*|_1\le L$,
\[
\|Y-X\bh^{(L)}\|^2 \le \|Y-X\beta^*\|^2 = \|\ve\|^2\, .
\]
Since $|\beta^*|_1$ and $|\bh^{(L)}|_1$ are both bounded by $L$, the above inequality implies that  
\begin{align}
\|\mu^*-X\bh^{(L)}\|^2 &\le 2\sum_{i=1}^n \ve_i(x_i \bh^{(L)} - x_i \beta^*)\nonumber\\
&\le 2\sum_{i=1}^n \sum_{j=1}^p \ve_i x_{ij}(\bh^{(L)}_j - \beta^*_j)\nonumber\\
&\le 4L \max_{1\le j\le p} \biggl|\sum_{i=1}^n \ve_i x_{ij}\biggr|\, .\label{c3}
\end{align}
By Lemma \ref{app2}, 
\begin{align}
\ee\biggl(\max_{1\le j\le p}\biggl|\sum_{i=1}^n \ve_i x_{ij}\biggr|\biggr) &\le \sigma\sqrt{2\log (2p)} \max_{1\le j\le p} \biggl(\sum_{i=1}^n x_{ij}^2\biggr)^{1/2}\nonumber\\
&\le \sigma\bigl(2n M^{1/2}\log (2p)\bigr)^{1/2}\, .\label{c4}
\end{align}
Combining \eqref{c1}, \eqref{c2}, \eqref{c3} and \eqref{c4} completes the proof of Theorem \ref{mainthm}. 
\end{proof}

\section{Proofs of Theorem \ref{sigma} and Corollary \ref{sigmacor}}\label{corsec}

\begin{proof}[Proof of Theorem \ref{sigma}]
Let $\hmu := X\bh^{\textup{CV}}$. Note that 
\begin{align*}
|\hat{\sigma}^2 - \sigma^2|&= \biggl|\frac{\|Y-\hmu\|^2-n\sigma^2}{n}\biggr|\\
&= \biggl|\frac{\|\ve\|^2 + 2\ve \cdot (\mu^*-\hmu) + \|\mu^*-\hmu\|^2-n\sigma^2}{n}\biggr|\\
&\le \frac{|\|\ve\|^2 - n\sigma^2|}{n} + \frac{2\|\ve\|\|\mu^*-\hmu\|}{n} + \frac{\|\mu^*-\hmu\|^2}{n}\, .
\end{align*}
Let $A$ be the event that $N_1\delta$ and $N_2\delta$ both exceed $|\beta^*|_1$. Since $\ee(\ve_i^2)=\sigma^2$ and $\var(\ve_i^2) = 2\sigma^4$, and the $\ve_i$'s are independent, therefore
\begin{align*}
\ee\biggl(\frac{|\|\ve\|^2 - n\sigma^2|}{n}; A\biggr)&\le \ee\biggl(\frac{|\|\ve\|^2 - n\sigma^2|}{n}\biggr)\\
&\le \biggl(\var\biggl(\frac{\|\ve\|^2}{n}\biggr)\biggr)^{1/2}= \sigma^2\sqrt{\frac{2}{n}}\, .
\end{align*}
Next, note that by the Cauchy-Schwarz inequality,
\begin{align*}
\ee\biggl(\frac{2\|\ve\|\|\mu^*-\hmu\|}{n}; A\biggr)&\le \frac{2}{n}\bigl(\ee(\|\ve\|^2) \ee(\|\mu^*-\hmu\|^2; A)\bigr)^{1/2}\\
&= 2\sigma \biggl(\ee\biggl(\frac{\|\mu^*-\hmu\|^2}{n}; A\biggr)\biggr)^{1/2}\, .
\end{align*}
An application of Theorem \ref{mainthm} completes the proof.
\end{proof}
\begin{proof}[Proof of Corollary \ref{sigmacor}]
This is simply a question of verifying that the error bound tends to zero as $n\ra\infty$. By assumptions (i) and (ii), the quantities $C_1$ and $C_2$ remain bounded and $E_n$ tends to zero, and the events $N_1\delta\ge |\beta^*|_1$ and $N_2\delta\ge |\beta^*|_1$ have probability tending to one as $n\ra\infty$. Finally by assumption (iii), $\sqrt{l_1/n}$, $\sqrt{l_2/n}$ and $\sqrt{\log(2p)/n}$ all tend to zero. To complete the proof, let $A$ be the event that both $N_1\delta$ and $N_2\delta$ exceed $|\beta^*|_1$, and note that for any $s >0$,
\begin{align*}
\pp(|\hat{\sigma}^2-\sigma^2|\ge s) &\le \pp(\{|\hat{\sigma}^2-\sigma^2|\ge s\}\cap A) + \pp(A^c)\\
&\le \frac{1}{s}\ee(|\hat{\sigma}^2-\sigma^2|;\, A) + \pp(A^c)\,.
\end{align*}
We have argued above that both terms in the last line tend to zero. This completes the proof.
\end{proof}

\appendix
\setcounter{section}{1}
\setcounter{thm}{0}
\setcounter{equation}{0}
\section*{Appendix}
This appendix contains a few simple lemmas that have been used several times in the proof of Theorem \ref{mainthm}. All of these are well-known results. We give proofs for the sake of completeness. 
\begin{lmm}\label{app1}
Suppose that $\xi_1,\dots,\xi_m$ are independent, mean zero random variables, and $\gamma_1, \ldots, \gamma_m$ are constants such that $|\xi_i|\leq \gamma_i$ almost surely for each $i$. Then for each $\theta \in \mathbb{R}$,
$$\mathbb{E}\left(e^{\theta\sum_{i=1}^m\xi_i}\right)\leq e^{\theta^2\sum_{i=1}^{m}\gamma_i^2/2}$$
\end{lmm}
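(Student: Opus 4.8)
The plan is to prove this standard sub-Gaussian moment generating function bound (Hoeffding's lemma) in two stages: first reduce the joint estimate to a product of one-dimensional estimates via independence, and then establish the single-variable bound $\ee(e^{\theta\xi_i})\le e^{\theta^2\gamma_i^2/2}$ for a mean-zero random variable bounded by $\gamma_i$. First I would use independence to factorize the moment generating function: since the $\xi_i$ are independent, so are the $e^{\theta\xi_i}$, and hence
\[
\ee\Bigl(e^{\theta\sum_{i=1}^m \xi_i}\Bigr) = \prod_{i=1}^m \ee\bigl(e^{\theta\xi_i}\bigr)\,.
\]
It therefore suffices to prove $\ee(e^{\theta\xi_i})\le e^{\theta^2\gamma_i^2/2}$ for each fixed $i$, since multiplying these bounds immediately yields $\exp(\theta^2\sum_{i=1}^m\gamma_i^2/2)$, which is exactly the claim.

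For the one-dimensional bound, fix $i$ and abbreviate $\xi=\xi_i$, $\gamma=\gamma_i$. The key tool is convexity of the map $t\mapsto e^{\theta t}$. Since $|\xi|\le\gamma$ almost surely, I would write $\xi$ as the convex combination of the endpoints $\pm\gamma$ given by $\xi=\frac{\gamma+\xi}{2\gamma}\gamma+\frac{\gamma-\xi}{2\gamma}(-\gamma)$, and apply convexity pointwise to obtain
\[
e^{\theta\xi}\le \frac{\gamma+\xi}{2\gamma}e^{\theta\gamma}+\frac{\gamma-\xi}{2\gamma}e^{-\theta\gamma}\,.
\]
Taking expectations and using $\ee(\xi)=0$ collapses the term linear in $\xi$, leaving $\ee(e^{\theta\xi})\le \frac12(e^{\theta\gamma}+e^{-\theta\gamma})=\cosh(\theta\gamma)$.

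The final and most substantive analytic step is the elementary inequality $\cosh(t)\le e^{t^2/2}$, valid for every real $t$. I would verify this by comparing power series: $\cosh(t)=\sum_{k\ge 0}t^{2k}/(2k)!$ while $e^{t^2/2}=\sum_{k\ge 0}t^{2k}/(2^k k!)$, and term by term $(2k)!\ge 2^k k!$ because $(2k)!=\prod_{j=1}^k(2j)(2j-1)\ge\prod_{j=1}^k 2j=2^k k!$. Applying this with $t=\theta\gamma$ gives $\cosh(\theta\gamma)\le e^{\theta^2\gamma^2/2}$, completing the one-dimensional estimate and hence the lemma. The only mild subtlety — the \emph{hard part}, such as it is — is the convexity step, where one uses that the hypothesis $|\xi|\le\gamma$ (a symmetric bound, rather than a general two-sided range) makes $\pm\gamma$ the natural interpolation endpoints and causes the constant to come out as $\gamma^2/2$; everything else is routine.
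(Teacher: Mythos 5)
Your proof is correct and is essentially the same as the paper's: both reduce to a single variable by independence, bound $e^{\theta\xi}$ by convexity of the exponential using $\pm\gamma$ as interpolation endpoints (your weights $\frac{\gamma\pm\xi}{2\gamma}$ are exactly the paper's $\alpha$ and $1-\alpha$), take expectations to get $\cosh(\theta\gamma)$, and finish with the power-series inequality $\cosh t\le e^{t^2/2}$. The only cosmetic difference is that the paper additionally normalizes $\theta=1$ without loss of generality, whereas you carry $\theta$ through explicitly.
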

\begin{proof}
By independence, it suffices to prove the lemma for $m=1$. Also, without loss of generality, we may take $\theta=1$. Accordingly, let $\xi$ be a random variable and $\gamma$ be a constant such that $|\xi|\le \gamma$ almost surely. Let
\[
\alpha := \frac{1}{2}\biggl(1-\frac{\xi}{\gamma}\biggr)\,.
\]
Since $|\xi|\le \gamma$, therefore $\alpha\in [0,1]$. Thus, by the convexity of the exponential map, 
\[
e^\xi = e^{-\gamma\alpha +\gamma(1-\alpha)} \le \alpha e^{-\gamma} + (1-\alpha)e^\gamma\,.
\]
Taking expectation on both sides, and using the assumption that $\ee(\xi)=0$, we get
\[
\ee(e^\xi) \le \cosh \gamma\, .
\]
It is easy to verify that $\cosh \gamma\le e^{\gamma^2/2}$ by power series expansion.
\end{proof}
\begin{lmm}\label{app2}
Suppose that $\xi_1,\dots, \xi_m$ are mean zero random variables, and $\sigma$ is a constant such that $\mathbb{E}(e^{\theta \xi_i})\leq e^{\theta^2\sigma^2/2}$ for each $\theta\in \mathbb{R}$. Then
$$\mathbb{E}(\max_{1\leq i\leq m}\xi_i) \leq \sigma\sqrt{2\log m}$$
and
$$\mathbb{E}\bigl(\max_{1\leq i \leq m}|\xi_i|\bigr)\leq \sigma\sqrt{2\log(2m)}\,.$$
\end{lmm}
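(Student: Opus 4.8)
The plan is to use the standard exponential-moment (Chernoff-style) bound combined with Jensen's inequality and a union bound, then optimize over the free parameter. First I would fix any $\theta>0$ and apply Jensen's inequality to the convex map $x\mapsto e^{\theta x}$ to write $e^{\theta\,\ee(\max_i\xi_i)}\le \ee(e^{\theta\max_i\xi_i})$. Since the maximum of the exponentials is at most their sum, I would then bound $\ee(e^{\theta\max_i\xi_i})=\ee(\max_i e^{\theta\xi_i})\le\sum_{i=1}^m\ee(e^{\theta\xi_i})$, and invoke the hypothesis $\ee(e^{\theta\xi_i})\le e^{\theta^2\sigma^2/2}$ to get $e^{\theta\,\ee(\max_i\xi_i)}\le m\,e^{\theta^2\sigma^2/2}$.

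Taking logarithms and dividing by $\theta$ yields
\[
\ee\Bigl(\max_{1\le i\le m}\xi_i\Bigr)\le \frac{\log m}{\theta}+\frac{\theta\sigma^2}{2}\,,
\]
valid for every $\theta>0$. The next step is to minimize the right-hand side over $\theta$; the optimal choice is $\theta=\sqrt{2\log m}/\sigma$, and substituting it back gives exactly $\sigma\sqrt{2\log m}$, establishing the first inequality.

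For the second inequality I would reduce to the first by doubling the index set. The key observation is that each $-\xi_i$ also satisfies the sub-Gaussian hypothesis, since $\ee(e^{\theta(-\xi_i)})=\ee(e^{(-\theta)\xi_i})\le e^{(-\theta)^2\sigma^2/2}=e^{\theta^2\sigma^2/2}$. Applying the first inequality to the $2m$ mean-zero variables $\xi_1,\dots,\xi_m,-\xi_1,\dots,-\xi_m$, and noting that $\max_{1\le i\le m}|\xi_i|$ equals the maximum of this enlarged family, immediately gives $\ee(\max_i|\xi_i|)\le\sigma\sqrt{2\log(2m)}$.

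There is no serious obstacle here, as this is a classical maximal inequality; the only point requiring a small amount of care is the optimization over $\theta$, where one must verify that the optimal value produces the clean constant $\sqrt{2}$ rather than a larger one, and that each step (Jensen, the sum bound, and the doubling argument for the absolute value) is applied in the correct direction.
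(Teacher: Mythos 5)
Your proof is correct and follows essentially the same route as the paper: Jensen's inequality (you apply it to the convex exponential, the paper equivalently to the concave logarithm), bounding the maximum of exponentials by their sum, optimizing $\theta=\sigma^{-1}\sqrt{2\log m}$, and handling $\max_i|\xi_i|$ by applying the first bound to the doubled collection $\xi_1,\ldots,\xi_m,-\xi_1,\ldots,-\xi_m$. Your explicit check that $-\xi_i$ satisfies the same sub-Gaussian hypothesis is a detail the paper leaves implicit, but the argument is otherwise identical.
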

\begin{proof}
Note that for any $\theta \ge 0$,
\begin{align*}
\ee(\max_{1\le i\le m}\xi_i) &= \frac{1}{\theta}\ee(\log e^{\theta\max_{1\le i\le m} \xi_i})\\
&\le \frac{1}{\theta}\ee\biggl(\log \sum_{i=1}^m e^{\theta \xi_i}\biggr)\\
&\le \frac{1}{\theta} \log \sum_{i=1}^m \ee(e^{\theta \xi_i})\\
&\le \frac{\log m}{\theta} + \frac{\theta \sigma^2}{2}\, .
\end{align*}
The proof of the first inequality is completed by choosing $\theta = \sigma^{-1}\sqrt{2\log m}$. The second inequality is proved by applying the first inequality to the collection $\xi_1,\ldots, \xi_m, -\xi_1,\ldots,-\xi_m$.
\end{proof}
\begin{lmm}\label{app4}
If $Z\sim N(\mu, \sigma^2)$, then for any $a>1$, 
\[
\ee(e^{Z^2/2a\sigma^2}) = e^{\mu^2/2(a-1)\sigma^2}\sqrt{\frac{a}{a-1}}\, .
\]
\end{lmm}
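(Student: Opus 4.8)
The plan is to compute the expectation directly, by writing it as a Gaussian integral and completing the square in the exponent. Using the density of $Z \sim N(\mu,\sigma^2)$, I would start from
\[
\ee(e^{Z^2/2a\sigma^2}) = \frac{1}{\sqrt{2\pi}\,\sigma}\int_{-\infty}^\infty \exp\biggl(\frac{z^2}{2a\sigma^2} - \frac{(z-\mu)^2}{2\sigma^2}\biggr)\, dz\,.
\]
The hypothesis $a>1$ enters precisely here: it is what guarantees that the combined quadratic in the exponent has a strictly negative leading coefficient in $z$, so that the integral converges. Without it the expectation would be infinite.

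The key step is to simplify the exponent. Factoring out $1/2\sigma^2$, the bracketed quantity is $\frac{z^2}{a} - (z-\mu)^2 = -\frac{a-1}{a}z^2 + 2\mu z - \mu^2$. Completing the square in $z$ rewrites this as $-\frac{a-1}{a}\bigl(z - \frac{\mu a}{a-1}\bigr)^2 + \frac{\mu^2}{a-1}$, where the constant term $\frac{\mu^2}{a-1}$ comes from collecting the leftover contributions $\frac{\mu^2 a}{a-1} - \mu^2$.

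I would then pull the constant factor $e^{\mu^2/2(a-1)\sigma^2}$ out of the integral and evaluate the remaining Gaussian integral $\int_{-\infty}^\infty e^{-c(z-b)^2}\,dz = \sqrt{\pi/c}$ with $c = \frac{a-1}{2a\sigma^2}$. This produces $\sqrt{2\pi a\sigma^2/(a-1)}$, and multiplying by the normalizing constant $1/\sqrt{2\pi}\,\sigma$ collapses everything to $\sqrt{a/(a-1)}$, which gives the claimed identity. There is no genuine obstacle here: the whole argument is the standard completion-of-the-square computation for a Gaussian integral, and the only point requiring care is bookkeeping of the constant term and the prefactor so that they combine correctly into the stated closed form.
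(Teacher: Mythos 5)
Your proposal is correct and is essentially the paper's own proof: both complete the square in the exponent of the Gaussian integral, arriving at the same shifted square $-\frac{a-1}{a}\bigl(z-\frac{\mu a}{a-1}\bigr)^2$ with leftover constant $\frac{\mu^2}{a-1}$, and then evaluate the resulting Gaussian integral. Your additional remark that $a>1$ is exactly what makes the integral converge is a nice clarification but does not change the argument.
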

\begin{proof}
This is a simple Gaussian computation. Just note that since 
\[
\int_{-\infty}^\infty e^{-(x-\alpha)^2/2\beta^2} dx = \sqrt{2\pi \beta^2}
\]
for any $\alpha$ and $\beta$, therefore
\begin{align*}
\int_{-\infty}^\infty e^{x^2/2a\sigma^2} e^{-(x-\mu)^2/2\sigma^2} dx &= \int_{-\infty}^\infty e^{(x^2 - a(x-\mu)^2)/2a\sigma^2}dx\\
 &= \int_{-\infty}^\infty e^{-(a-1)(x-\mu a/(a-1))^2/2a\sigma^2 + \mu^2/2(a-1)\sigma^2} dx\\
 &= e^{\mu^2/2(a-1)\sigma^2} \sqrt{\frac{2\pi a\sigma^2}{a-1}}\, .
\end{align*}
Now divide on both sides by $\sqrt{2\pi \sigma^2}$ to complete the proof. 
\end{proof}

\vskip.2in
\noindent{\bf Acknowledgments.} We thank Stephen Reid and Rob Tibshirani for many helpful discussions and comments.
\vskip.2in 



\begin{thebibliography}{99}
\bibitem{abramovichetal06} {\sc Abramovich, F.,  Benjamini, Y., Donoho, D.~L.} and {\sc Johnstone, I.~M.} (2006). Adapting to unknown sparsity by controlling the false discovery rate. {\it Ann. Stat.,} {\bf 34} no. 2, 584--653. 

\bibitem{barbercandes15} {\sc Barber, R.~F.} and {\sc Cand\`es, E.~J.} (2015). Controlling the false discovery rate via knockoffs. 
{\it Ann. Stat.,} {\bf 43} no. 5, 2055--2085. 

	
\bibitem{bartlettetal12} {\sc Bartlett, P.~L., Mendelson, S.} and {\sc Neeman, J. (2012).} $\ell^1$-regularized linear regression: persistence and oracle inequalities. 
{\it Probab. Theory Related Fields,} {\bf 154} no. 1-2, 193--224.

\bibitem{bcw11} {\sc Belloni, A., Chernozhukov, V.} and {\sc Wang, L.} (2011). Square-root lasso: pivotal recovery of sparse signals via conic programming. {\it Biometrika,} {\bf 98} no. 4, 791--806.


\bibitem{bickeletal09} {\sc Bickel, P. J., Ritov, Y. A.} and {\sc Tsybakov, A. B. (2009).} Simultaneous analysis of Lasso and Dantzig selector. {\it Ann. Statist.,} {\bf 37} no. 4, 1705--1732.



\bibitem{birge06} {\sc Birg\'e, L.} (2006). Model selection via testing: An alternative to (penalized) maximum likelihood estimators. {\it Ann. Inst. H. Poincar\'e Probab. Statist.,} {\bf 42}, 273--325.

\bibitem{birgemassart01} {\sc Birg\'e, L.} and {\sc Massart, P.} (2001). Gaussian model selection. {\it J. Eur. Math. Soc. (JEMS),} {\bf 3} no. 3, 203--268.

	

\bibitem{bg11} {\sc B\"uhlmann, P.} and {\sc van de Geer, S. (2011).} {\it Statistics for high-dimensional data. Methods, theory and applications.} Springer Series in Statistics. Springer, Heidelberg.


\bibitem{btw06} {\sc Bunea, F., Tsybakov, A.} and {\sc Wegkamp, M. (2006).} Sparsity oracle inequalities for the Lasso. {\it Electron. J. Statist.,} {\bf 1}, 169--194.


\bibitem{btw07} {\sc Bunea, F., Tsybakov, A.} and {\sc Wegkamp, M. (2007).} Aggregation for Gaussian regression. {\it Ann. Statist.} {\bf 35} no. 4, 1674--1697. 








\bibitem{candestao07} {\sc Candes, E.} and {\sc Tao, T. (2007).} The Dantzig selector: Statistical estimation when $p$ is much larger than $n$. {\it Ann. Statist.,} {\bf 35} no. 6,  2313--2351. 


\bibitem{chatterjee14} {\sc Chatterjee, S.} (2014). Assumptionless consistency of the Lasso. \textit{arXiv preprint arXiv:1303.5817.}

\bibitem{chatterjee14b} {\sc Chatterjee, S.} (2014). A new perspective on least squares under convex constraint. {\it Ann. Statist.,} {\bf 42} no. 6, 2340--2381.

\bibitem{chenetal98} {\sc Chen, S., Donoho, D.~L.} and {\sc Saunders, M.~A.} (1998). Atomic decomposition by basis pursuit. {\it SIAM J. Sci. Comput.,} {\bf 20} no. 1, 33--61.

\bibitem{dicker14} {\sc Dicker, L.~H.} (2014). Variance estimation in high-dimensional linear models. To appear in {\it Biometrika.} 




\bibitem{donohoetal06} {\sc Donoho, D.~L., Elad, M.} and {\sc Temlyakov, V.~N.} (2006). Stable recovery of sparse overcomplete representations in the presence of noise. {\it IEEE Transactions on Information Theory,} {\bf 52} no. 1, 6--18.

\bibitem{donohohuo01}  {\sc Donoho, D.}  and {\sc Huo, X. (2001).} Uncertainty Principles and Ideal Atomic Decompositions. {\it IEEE Transactions on Information Theory,} {\bf 47}, 2845--2862. 




\bibitem{donohostark} {\sc Donoho, D.~L.} and {\sc Stark, P.~B.} (1989). Uncertainty principles and signal recovery. {\it SIAM J. Appl. Math.,} {\bf 49} no. 3, 906--931.

\bibitem{efronetal04} {\sc Efron, B., Hastie, T., Johnstone, I.} and {\sc Tibshirani, R. (2004).} Least angle regression. 
{\it Ann. Statist.,} {\bf 32} no. 2, 407--499. 

\bibitem{fanetal12} {\sc Fan, J., Guo, S.} and {\sc Hao, N.} (2012).
Variance estimation using refitted cross-validation in ultrahigh dimensional regression. {\it J. R. Stat. Soc. Ser. B.,} {\bf 74} no. 1, 37--65. 


\bibitem{fanli01} {\sc Fan, J.} and  {\sc Li, R. (2001).}  Variable selection via nonconcave penalized likelihood and its oracle properties. {\it J. Amer. Statist. Assoc.,} {\bf 96} no. 456, 1348--1360.
 
 
 

\bibitem{friedmanetal10} {\sc Friedman, J., Hastie, T.} and {\sc Tibshirani, R.} (2010). Regularization paths for generalized linear models via coordinate descent. {\it J. Statist. Software,} {\bf 33} no. 1, 1--22.

\bibitem{golubheathwahba79} {\sc Golub, G.~H., Heath, M.} and {\sc Wahba, G.} (1979). Generalized cross-validation as a method for choosing a good ridge parameter. {\it Technometrics,} {\bf 21} no. 2, 215--223.
 
\bibitem{gr} {\sc Greenshtein, E.} and {\sc Ritov, Y.~A. (2004).} Persistence in high-dimensional linear predictor selection and the virtue of overparametrization. {\it Bernoulli,} {\bf 10} no. 6, 971--988.
 
\bibitem{hastieetal09} {\sc Hastie, T., Tibshirani, R.} and {\sc Friedman, J.} (2009). {\it The elements of statistical learning. Data mining, inference, and prediction. Second edition.} Springer, New York.
 

\bibitem{hm13a} {\sc Homrighausen, D.} and {\sc McDonald, D.~J.} (2013). Risk-consistency of cross-validation with lasso-type procedures. {\it arXiv preprint arXiv:1308.0810.}


\bibitem{hm13} {\sc Homrighausen, D.} and {\sc McDonald, D.~J.} (2013). The lasso, persistence, and cross- validation. {\it JMLR W\&CP,} {\bf 28}, 1031--1039.

\bibitem{hm14} {\sc Homrighausen, D.} and {\sc McDonald, D.~J.} (2014).  
Leave-one-out cross-validation is risk consistent for lasso. 
{\it Mach. Learn.,} {\bf 97} no. 1-2, 65--78. 

\bibitem{javanmardmontanari14} {\sc Javanmard, A.} and {\sc Montanari, A.} (2014). Confidence intervals and hypothesis testing for high-dimensional regression. {\it J. Mach. Learn. Res.,} {\bf 15}, 2869--2909.

\bibitem{knightfu00} {\sc Knight, K.} and {\sc Fu, W. (2000).} Asymptotics for lasso-type estimators. {\it Ann. Statist.,} {\bf 28} no. 5, 1356--1378. 



\bibitem{lecuemitchell12} {\sc Lecu\'e, G.} and {\sc Mitchell, C.} (2012). Oracle inequalities for cross-validation type procedures. {\it Electron. J. Stat.,} {\bf 6}, 1803--1837. 

\bibitem{lockhartetal13} {\sc Lockhart, R., Taylor, J., Tibshirani, R.~ J.} and {\sc Tibshirani, R.} (2014). A significance test for the lasso. {\it Ann. Statist.,} {\bf 42} no. 2, 413--468. 


\bibitem{massartmeynet11} {\sc Massart, P.} and {\sc Meynet, C. (2011).} The Lasso as an $\ell_1$-ball model selection procedure. {\it Elec. J. Statist.,} {\bf 5}, 669--687.


\bibitem{meinshausenbuhlmann06} {\sc Meinshausen, N.} and  {\sc  B\"uhlmann, P. (2006).} High-dimensional graphs and variable selection with the lasso. {\it Ann. Statist.,} {\bf 34} no. 3, 1436--1462.


\bibitem{meinshausenyu09} {\sc Meinshausen, N.} and {\sc Yu, B. (2009).} Lasso-type recovery of sparse representations for high-dimensional data. {\it Ann. Statist.,} {\bf 37} no. 1, 246--270.




\bibitem{reidetal13} {\sc Reid, S., Tibshirani, R.} and  {\sc Friedman, J.} (2013). A study of error variance estimation in lasso regression. {\it arXiv preprint arXiv:1311.5274.}


\bibitem{rigollet} {\sc Rigollet, P.} and {\sc Tsybakov, A.} (2011). Exponential screening and optimal rates of sparse estimation. {\it Ann. Statist.,} {\bf 39} no. 2, 731--771.



\bibitem{stadleretal10} {\sc St\"adler, N., B\"uhlmann, P.} and {\sc van de Geer, S.} (2010). $\ell^1$-penalization for mixture regression models. {\it TEST,} {\bf 19} no. 2, 209--256.

\bibitem{stone74} {\sc Stone, M.} (1974). Cross-validatory choice and assessment of statistical predictions. {\it J. Roy. Statist. Soc. Ser. B,} {\bf 36}, 111--147.

\bibitem{stone77} {\sc Stone, M.} (1977). Asymptotics for and against cross-validation. {\it Biometrika,} {\bf 64} no. 1, 29--35.

\bibitem{sunzhang10} {\sc Sun, T.} and {\sc Zhang, C.-H.} (2010). Comments on: $\ell^1$-penalization for mixture regression models. {\it TEST,} {\bf 19} no. 2, 270--275.


\bibitem{sunzhang12} {\sc Sun, T.} and {\sc Zhang, C.-H.} (2012). Scaled sparse linear regression. {\it Biometrika,} {\bf 99} no. 4, 879--898.

\bibitem{sunzhang13} {\sc Sun, T.} and {\sc Zhang, C.-H.} (2013). Sparse matrix inversion with scaled lasso. {\it J. Mach. Learn. Res.,} {\bf 14}, 3385--3418.

\bibitem{tibs96} {\sc Tibshirani, R.} (1996). Regression shrinkage and selection via the lasso. {\it J. Royal Statist. Soc., Ser. B,} {\bf 58} no. 1, 267--288.

\bibitem{tibs11} {\sc Tibshirani, R.} (2011). Regression shrinkage and selection via the lasso: a retrospective. {\it J. R. Stat. Soc. Ser. B,} {\bf 73} no. 3, 273--282.

\bibitem{tt} {\sc Tibshirani, R.~J.} and {\sc Taylor, J.} (2012). Degrees of freedom in lasso problems. {\it Ann. Statist.,} {\bf 40} no. 2, 1198--1232.

\bibitem{vdg08} {\sc van de Geer, S.~A.} (2008). High-dimensional generalized linear models and the lasso. {Ann. Statist.,} {\bf 36} no. 2,  614--645.

	

\bibitem{vdgl13} {\sc van de Geer, S.} and {\sc Lederer, J.} (2013). The Lasso, correlated design, and improved oracle inequalities. {\it From probability to statistics and back: high-dimensional models and processes, 303?316, Inst. Math. Stat. (IMS) Collect., 9,} Inst. Math. Statist., Beachwood, OH. 

\bibitem{wainwright09} {\sc Wainwright, M. (2009).} Sharp thresholds for high-dimensional and noisy sparsity recovery using $\ell_1$-constrained quadratic programming (Lasso). {\it IEEE Transactions on Information Theory,} {\bf 55}, 2183--2202.

\bibitem{wangleng07} {\sc Wang, H.} and {\sc Leng, C.} (2007). Unified LASSO estimation by least squares approximation. {\it J. Amer. Statist. Assoc.,} {\bf 102} no. 479, 1039--1048.
	
\bibitem{yuanlin06} {\sc Yuan, M.} and {\sc Lin, Y.} (2006). Model selection and estimation in regression with grouped variables. {\it J. Royal Stat. Soc.: Series B,} {\bf 68} no. 1, 49--67.

\bibitem{zhang09} {\sc Zhang, T. (2009).} Some sharp performance bounds for least squares regression with $L_1$ regularization. {\it Ann. Statist.,} {\bf 37}, 2109--2144.





\bibitem{zhaoyu07} {\sc Zhao, P.} and {\sc Yu, B. (2007).} On model selection consistency of Lasso. {\it J. Machine Learning Research,} {\bf 7} no. 2, 2541--2563.


\bibitem{zou06} {\sc Zou, H. (2006).} The adaptive lasso and its oracle properties. {\it J.  Amer. Statist. Assoc.,} {\bf 101} no. 476, 1418--1429.

\bibitem{zh05} {\sc Zou, H.} and {\sc Hastie, T.} (2005). Regularization and variable selection via the elastic net. {\it J. R. Stat. Soc. Ser. B,} {\bf 67} no. 2, 301--320.


\bibitem{zouetal07} {\sc Zou, H., Hastie, T.} and {\sc Tibshirani, R.}  (2007). On the ``degrees of freedom" of the lasso. {\it Ann. Statist.,}  {\bf 35} no. 5, 2173--2192.


\end{thebibliography}
\end{document}